\theoremstyle{plain}
\newtheorem{thm}{Theorem}[section]
\newtheorem{lem}[thm]{Lemma}
\newtheorem{prop}[thm]{Proposition}
\newtheorem{cor}[thm]{Corollary}
\newtheorem{defn}[thm]{Definition}
\newtheorem*{rem}{Remark}
\newtheorem{conj}[thm]{Conjecture}
\numberwithin{sublem}{thm} 
\numberwithin{equation}{section}
\renewcommand{\mod}[1]{\  (\text{mod }{#1})}
\renewcommand*{\d}{\, \textup{d}}
\renewcommand{\gcd}{\operatorname{gcd}}
\newcommand{\e}{\varepsilon}
\newcommand{\Mod}[1]{\ (\mathrm{mod}\ #1)}
\renewcommand{\mod}[1]{\ \mathrm{mod}\ #1}
\numberwithin{equation}{section}
\numberwithin{equation}{section}
\subjclass[2020]{Primary 11N05, 11N13; Secondary 11N36}
\keywords{prime numbers, sieve methods, residue classes}
\begin{document}
\title{Residue Class Patterns of Consecutive Primes}
\author{Cheuk Fung (Joshua) Lau}
\address{{Mathematical Institute}, {University of Oxford}, {{Radcliffe Observatory Quarter, Woodstock Road}, {OX2 6GG}, {Oxford}, {United Kingdom}}}
\email{joshua.lau@maths.ox.ac.uk}
\begin{abstract}
Dickson's conjecture and the Hardy--Littlewood prime tuple conjecture predict that every pattern of reduced residue classes modulo $q$ is attained by infinitely many strings of $m$ consecutive primes. At present, however, even proving that a single non-constant residue class pattern of length $m$ occurs infinitely often is beyond the reach of existing methods. Combining Dirichlet's theorem on primes in arithmetic progressions with a theorem of Shiu (2000) shows that, for any $m,q\in\mathbb N$ with $q \ge 3$, at least $m\varphi(q)$ residue class patterns of length $m$ are attained by infinitely many consecutive primes.\\

In this paper, we prove that if $q$ is squarefree, every prescribed sequence of at least $60m\log m$ reduced residue classes mod $q$ contains, in order, an $m$-term block pattern that occurs infinitely often among consecutive primes, with each constant block of length at most $\lceil\log m\rceil$. A recursive combinatorial argument then shows that if $q$ is squarefree and $q \gg (\log m)^2$, then at least
\[
\gg \frac{m}{(\log m)^{10}} \varphi(q)^2
\]
residue class patterns of length $m$ occur infinitely often among consecutive primes. Moreover, we also show that if $q$ is squarefree and $q \gg (\log m)^2$, then at least
\[
\gg e^{-O(m \log_2 m/\log m)} \varphi(q)^{m/\lceil \log m \rceil}
\]
residue class patterns of length $m$ occur infinitely often among consecutive primes. The proof consists of a modification of the Maynard--Tao sieve found in Banks, Freiberg, and Maynard (2016), by considering the $r$-th moment instead of the 2nd moment for an integer $r$ depending on $m$, which is then combined with an Erd\H{o}s--Rankin type construction.
\end{abstract}
\maketitle
\tableofcontents
\section{Introduction}
A central theme in analytic number theory is that, after accounting for the obvious local congruence obstructions, the prime numbers should behave like a random set. This philosophy underlies the Hardy--Littlewood prime tuple conjecture and predicts that every admissible local pattern of primes should occur infinitely often. One way to study this phenomenon is to examine the residue classes of consecutive primes modulo a fixed integer $q$.\\

For $q,m \in \mathbb{N}$ with $q \ge 2$, $x \in \mathbb{R}$ and $\mathbf{a} \in \prod_{i=1}^m (\mathbb{Z}/q\mathbb{Z})^\times$, define $$\pi(x;q,\mathbf{a})=\#\{p_n \leq x:p_{n+i-1} \equiv a_i \pmod{q} \text{ for all }i=1,2,\ldots,m\},$$ where $p_i$ denotes the $i$-th prime. Thus $\pi(x;q,\mathbf{a})$ counts occurrences of a prescribed residue class pattern among $m$ consecutive primes.\\

Understanding these patterns is closely related to the broader question of how randomly distributed consecutive primes are. Numerical investigations and probabilistic models suggest that, apart from congruence restrictions, every residue class pattern should occur infinitely often. A consequence of Dickson's conjecture states the following.
\begin{conj} \label{conj:dick}
    For any $q,m \in \mathbb{N}$ with $q \ge 2$ and $\mathbf{a} \in \prod_{i=1}^m (\mathbb{Z}/q\mathbb{Z})^\times$, $\pi(x;q,\mathbf{a}) \to \infty$ as $x \to \infty$. Equivalently,
    $$\#\left\{\mathbf{a} \in \prod_{i=1}^m (\mathbb{Z}/q\mathbb{Z})^\times: \pi(x;q,\mathbf{a}) \to \infty \text{ as } x 
    \to \infty\right\}=\varphi(q)^m.$$
\end{conj}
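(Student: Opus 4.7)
The conjecture is stated as a consequence of Dickson's prime $k$-tuples conjecture, and the plan is to sketch that implication. Fix $q, m$, and $\mathbf{a} = (a_1,\ldots,a_m) \in \prod_{i=1}^m (\mathbb{Z}/q\mathbb{Z})^\times$. I would construct a tuple of non-negative integers $\mathcal{H} = \{h_1 < \cdots < h_m\}$, a modulus $Q$ divisible by $q$, and a residue $n_0$ modulo $Q$ such that the following two properties hold. First, $n_0 + h_i \equiv a_i \pmod{q}$ for every $i$, so that if $n \equiv n_0 \pmod Q$ and each $n+h_i$ is prime, then $n+h_i \equiv a_i \pmod q$ automatically. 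Second, for every $k \in [0,h_m] \setminus \mathcal{H}$, the value $n+k$ is automatically composite, guaranteeing that the primes $n+h_1 < \cdots < n+h_m$ are in fact consecutive primes.

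The first condition is arranged by choosing $h_i$ with $h_i \equiv a_i - a_1 \pmod{q}$ and $n_0 \equiv a_1 \pmod{q}$. For the second I apply the standard ``composite-filling'' trick: for each intermediate $k$ pick a distinct prime $\ell_k$ coprime to $q$ and to the other $\ell_{k'}$, and impose $n \equiv -k \pmod{\ell_k}$. Combining all these local conditions with $n \equiv a_1 \pmod q$ via the Chinese Remainder Theorem produces a single congruence $n \equiv n_0 \pmod{Q}$, where $Q = q \prod_k \ell_k$. With $Q$ and $n_0$ now fixed, one considers the $m$ linear forms $L_i(t) = Qt + n_0 + h_i$ as $t$ ranges over $\mathbb{N}$; admissibility of $\{L_i\}_{i=1}^m$ (meaning that for every prime $p$, the image of the $L_i$ modulo $p$ does not cover all of $\mathbb{Z}/p\mathbb{Z}$) is verified by first choosing $\mathcal{H}$ to be admissible in the usual sense and then checking that the extra congruences do not destroy this. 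Dickson's conjecture then produces infinitely many $t$ for which $L_1(t),\ldots,L_m(t)$ are simultaneously prime; by construction these primes are consecutive and match the required residue pattern, so $\pi(x;q,\mathbf{a}) \to \infty$.

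The chief technical obstacle---apart from Dickson's conjecture itself, which is unproven and explains why this statement remains conjectural---is the joint bookkeeping required to simultaneously (a) keep $\mathcal{H}$ admissible modulo every prime, (b) ensure the auxiliary moduli $\ell_k$ are pairwise distinct and coprime to $q$, and (c) guarantee the composite-filling congruences do not accidentally force any $L_i(t)$ to always be divisible by some small prime. Points (a) and (b) are routine by choosing $\mathcal{H}$ with large, well-separated entries and drawing the $\ell_k$ from a pool of primes exceeding $h_m$; point (c) is handled by noting that only primes $p \leq m$ can possibly cause an obstruction, and admissibility at these primes can be arranged by small local shifts of the $h_i$. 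Once the data $(\mathcal{H}, Q, n_0)$ is in place, the rest of the deduction is a black-box application of the prime $k$-tuples conjecture.
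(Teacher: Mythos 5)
The paper does not prove this statement—it is explicitly a \emph{conjecture}, noted in one line as a consequence of Dickson's prime $k$-tuples conjecture with no derivation given. So there is no in-paper proof to compare against. That said, your sketch of the implication is correct and is the standard one: fix the residues of the $h_i$ modulo $q$ so that $n+h_i\equiv a_i\Mod q$ automatically, fill all intermediate positions $n+k$ with composites by imposing $n\equiv -k\Mod{\ell_k}$ for distinct large primes $\ell_k$ coprime to $q$, combine everything by CRT into a single progression, check admissibility of the resulting linear forms (automatic for $p\mid q$ since each $n_0+h_i$ is coprime to $q$; automatic for $p=\ell_k$ since $\ell_k>h_m$; arrangeable at small $p\nmid q$ by local shifts), and then apply Dickson's conjecture. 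Your bookkeeping identifies exactly the right issues and disposes of them correctly. It is worth noting that this skeleton—an Erd\H{o}s--Rankin composite-filling construction feeding an admissible tuple into a prime-producing input—is precisely the structure the paper uses unconditionally, with the Maynard--Tao sieve (Proposition \ref{maynardtaosieve}) standing in for Dickson's conjecture and Lemma \ref{lemwithmod} supplying the composite-filling. The difference is that the sieve only guarantees $m+1$ of the $M$ blocks contain a prime rather than all of them, which is why the paper's unconditional Theorem \ref{mainthmintro} produces a set $A$ of patterns rather than any single prescribed pattern.
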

Additionally, the Hardy--Littlewood prime tuple conjecture predicts the quantitative asymptotic behaviour of $\pi(x;q,\mathbf{a})$. Recently, more refined heuristics were developed by \citet{lemke2016unexpected}. Their work predicts substantial biases for finite ranges, for example, consecutive primes tend to avoid repeating the same residue class modulo $q$ more often than naive randomness would suggest, but these biases are expected to disappear asymptotically.\\

Despite these strong heuristic predictions, our current understanding remains extremely limited. Dirichlet's theorem on primes in arithmetic progressions states that $\pi(x;q,a) \to \infty$ as $x \to \infty$ for any $a \in (\mathbb{Z}/q\mathbb{Z})^\times$, so Conjecture \ref{conj:dick} is known when $m=1$. For general $m \in \mathbb{N}$, we have the following result of \citet{shiu2000strings}.
\begin{thm}[Shiu] \label{thm:shiu}
For any $m,q \in \mathbb{N}$ with $q \ge 2$ and $a \in (\mathbb{Z}/q\mathbb{Z})^\times$, let $\mathbf{a}=(a,\ldots,a) \in \prod_{i=1}^m (\mathbb{Z}/q\mathbb{Z})^\times$. Then, $\pi(x;q,\mathbf{a}) \to \infty$ as $x \to \infty$.
\end{thm}
Moreover, \citet{maynard2016dense} proved that $\pi(x;q,(a,\ldots,a)) \gg \pi(x)$. However, even proving that a single non-constant residue class pattern of length $m$ occurs infinitely often among consecutive primes is beyond the reach of existing methods. In an easier setting of consecutive sums of two squares instead of primes, \citet{kimmel2024consecutive,kimmel2025positive} considered
$$\mathbf{E} = \{c^2+d^2:c,d \in \mathbb{N}\}=\{E_n:n \in \mathbb{N}\},$$
and call $a \in \mathbb{Z}/q\mathbb{Z}$ $\mathbf{E}$-admissible if there is $c,d \in \mathbb{N}$ such that $c^2+d^2 \equiv a \Mod{q}$. For an $m$-tuple of admissible residue classes $\mathbf{a}=(a_1,...,a_m) \in (\mathbb{Z}/q\mathbb{Z})^m$, let
\begin{align*}
    N(x,q,\mathbf{a})=\#\{E_n \le x:E_{n+i-1} \equiv a_i \Mod{q} \ \forall 1 \le i \le m\}.
\end{align*}
\citet{kimmel2024consecutive} proved that for any $q \in \mathbb{N}$ with $q \ge 2$ and $a_1,a_2,a_3 \in \mathbb{Z}/q\mathbb{Z}$ $\mathbf{E}$-admissible, $N(x,q,(a_1,a_2,a_3)) \to \infty$ as $x \to \infty$. For a general $m \in \mathbb{N}$ and squarefree $q \in \mathbb{N}$, \citet{kimmel2025positive} proved that for any $m$-tuple $\mathbf{a}$ that is a concatenation of two constant tuples $(a_1,\ldots,a_1)$ and $(a_2,\ldots,a_2)$ with $a_1,a_2 \in (\mathbb{Z}/q\mathbb{Z})^\times$, we have $N(x,q,\mathbf{a}) \to \infty$ as $x \to \infty$. Even in this easier setting, we are very far from proving that all $\mathbf{E}$-admissible tuples of residue classes are indeed attained by infinitely many consecutive sums of two squares.\\

In this paper, we investigate the number of residue class patterns that are attained by infinitely many consecutive primes. One may use Dirichlet's theorem to show that for any $q,m \in \mathbb{N}$ with $q \ge 2$, there are at least $\varphi(q)$ many $m$-tuples $\mathbf{a} \in \prod_{i=1}^m (\mathbb{Z}/q\mathbb{Z})^\times$ such that
$\pi(x;q,\mathbf{a}) \to \infty \text{ as }x \to \infty.$ Using \citet{shiu2000strings} along with Dirichlet's theorem, we may obtain a better lower bound.
\begin{prop} \label{shiudirichlet}
    For any $q,m \in \mathbb{N}$ with $q \ge 3$, there are at least $m\varphi(q)$ many $m$-tuples $\mathbf{a} \in \prod_{i=1}^m (\mathbb{Z}/q\mathbb{Z})^\times$ such that
    $\pi(x;q,\mathbf{a}) \to \infty \text{ as }x \to \infty.$
\end{prop}
\begin{proof}
    For each $a \in (\mathbb{Z}/q\mathbb{Z})^\times$, Shiu's theorem states $\pi(x;q,(a,\ldots,a)) \to \infty$ as $x \to \infty$. By Dirichlet's theorem, we know that each string of consecutive primes all congruent to $a \pmod{q}$ must terminate. Since $\varphi(q) \ge 2$, by the pigeonhole principle there must exist $a' \in (\mathbb{Z}/q\mathbb{Z})^\times$ with $a' \neq a$ such that $\pi(x;q,(a,\ldots,a,a')) \to \infty$ as $x \to \infty$. Repeating this `shifting' argument $m-1$ more times, we obtain for each $a \in (\mathbb{Z}/q\mathbb{Z})^\times$ there are $m$ many $m$-tuples attained by infinitely many consecutive primes. By considering the first entry, the $m$-tuples obtained in this way for distinct $a \in (\mathbb{Z}/q\mathbb{Z})^\times$ are distinct, so in total we can obtain at least $m \varphi(q)$ such tuples.
\end{proof}
In fact, this lower bound is optimal if the only property of primes we use is Theorem \ref{thm:shiu}.
Let $q \in \mathbb{Z}_{>1}$, and let $(\mathbb{Z}/q\mathbb{Z})^\times=\{c_1,\ldots,c_{\varphi(q)}\}$. Consider the sequence
\begin{align*}
    b_n &= c_k, \text{ where } k-1 < n \le k, \ 1 \le k \le \varphi(q),\\
    b_{\varphi(q)+n} &= c_k, \text{ where } 2(k-1) < n \le 2k, \ 1 \le k \le \varphi(q),\\
    b_{3\varphi(q)+n} &= c_k, \text{ where } 3(k-1) < n \le 3k, \ 1 \le k \le \varphi(q),
\end{align*}
and so on, i.e.
\begin{align*}
    b_{r(r-1)\varphi(q)/2+n} &= c_k, \text{ where } (k-1)r < n \le kr, \ 1 \le k \le \varphi(q), \ r \ge 1.
\end{align*}
For $m \in \mathbb{N}$ and $\mathbf{a}=(a_1,\ldots,a_{m}) \in \prod_{i=1}^{m} (\mathbb{Z}/q\mathbb{Z})^\times$, define the corresponding counting function
\begin{align*}
    \widetilde \pi_{q} (x;\mathbf{a})=\#\{b_n \leq x:b_{n+i-1} \equiv a_i \Mod{q} \text{ for all }i=1,2,\ldots,m\}.
\end{align*}
We observe that for any $m \in \mathbb{N}$, $a \in (\mathbb{Z}/q\mathbb{Z})^\times$, and $\mathbf{a}=(a,\ldots,a) \in \prod_{i=1}^m (\mathbb{Z}/q\mathbb{Z})^\times$, the sequence $(b_n)_{n=1}^\infty$ satisfies $\widetilde \pi_{q}(x;\mathbf{a}) \to \infty$ as $x \to \infty$. However, there are exactly $m\varphi(q)$ many $m$-tuples $\mathbf{a} \in \prod_{i=1}^m (\mathbb{Z}/q\mathbb{Z})^\times$ such that $\widetilde{\pi}_{q}(x;\mathbf{a}) \to \infty$ as $x \to \infty$, namely
\begin{align*}
    (\underbrace{c_t,\ldots,c_t}_{r \text{ times}},\underbrace{c_{t+1},\ldots,c_{t+1}}_{m-r\text{ times}}) \in \prod_{i=1}^m(\mathbb{Z}/q\mathbb{Z})^\times, \quad 1 \le r \le m, \quad 1 \le t \le \varphi(q), \quad c_{\varphi(q)+1} := c_1.
\end{align*}

For general $m \in \mathbb{N}$, despite knowing that $m \varphi(q)$ tuples of residue classes are attained by infinitely many consecutive primes, it is currently not known whether any other specific tuple of residue classes is attained by infinitely many consecutive primes. To state our main results, we make the following definitions.
\begin{defn} \label{defn:notationforthms}
    For a squarefree{} integer $q \ge 2$, $m \in \mathbb{Z}^+$, and $A \subseteq \prod_{i=1}^m (\mathbb{Z}/q\mathbb{Z})^\times$, define
    \[
    \pi(x;q,A)=\#\{p_n \leq x:(p_n \mod{q},\ldots,p_{n+m-1} \mod{q}) \in A\}.
    \]
    Also, for $r \in \mathbb{Z}^+$ with $r>1$, define 
    \[
    M{}=\left\lceil \left(\frac{2^{3r-2} (r-1)^{2r-1}}{r!}\right)^{\frac{1}{r-1}} m(m(r-1)+r)^{\frac{1}{r-1}}  \right\rceil,
    \]
    and define the set of functions
    \small
    $$J_r(m,M)=\{j:\{1,\ldots,m\} \to \{1,\ldots,M\}:j(i+1) \geq j(i), \text{ no consecutive $r$ values }j(i) \text{ are equal}\}.$$
    \normalsize
\end{defn}
The main result of this paper is the following theorem, proven in Section \ref{mainresultsection}.
\begin{thm} \label{mainmodthm}
    Let $q$ be a squarefree{} integer, and $r,m \in \mathbb{Z}^+$ with $r>1$. Recall $\pi(x;q,A)$, $M$, and $J_r(m,M)$ from Definition \ref{defn:notationforthms}.
    For any $a_1,\ldots,a_{M{}} \in (\mathbb{Z}/q\mathbb{Z})^\times$, let
    $$A=\{(c_1,\ldots,c_m): \exists j \in J_r(m,M) \text{ s.t. }c_i=a_{j(i)} \forall 1 \leq i \leq m\}.$$
    Then $\pi(x;q,A) \to \infty$ as $x \to \infty$.
\end{thm}
Using this, one can argue combinatorially to obtain the following result for $q$ in `medium' range, which is proven in Section \ref{maincorsection}.
\begin{cor} \label{smallcorintro}
     For any $0<c<1$, if $q \ge 2$ is squarefree and $\varphi(q) > 8c^{-1}e^2 (\log m)^2$, then for $m$ sufficiently large,$$\#\left\{\mathbf{a} \in \prod_{i=1}^m (\mathbb{Z}/q\mathbb{Z})^\times:\lim_{x \to \infty} \pi(x;q,\mathbf{a})=\infty \right\} \geq \frac{\lfloor (1-c)m \rfloor c^5}{256e^{10}(\log m)^{10}}\varphi(q)(\varphi(q)-1).$$
\end{cor}
Choosing $c=5/6$, this gives a better lower bound than that of Proposition \ref{shiudirichlet} when $$\varphi(q)>3823e^{10} (\log m)^{10}+1.$$ In Section \ref{maincorsection}, we also obtain a corresponding lower bound when $q$ is in a `large' range.
\begin{cor} \label{largecorintro}
    For $m,r \in \mathbb{Z}^+$, recall $M$ from Definition \ref{defn:notationforthms}. If $q \ge 2$ is squarefree and $\varphi(q) \geq M{}$, there are at least
    $$\frac{\lceil m/(r-1) \rceil!}{M{}(M{}-1)\cdots(M{}-\lceil m/(r-1) \rceil+1)} \cdot \varphi(q)(\varphi(q)-1) \cdots (\varphi(q)-\lceil m/(r-1) \rceil+1)$$
    $m$-tuples $\mathbf{a}$ such that $\pi(x;q,\mathbf{a}) \to \infty$ as $x \to \infty$.
\end{cor}
For example, for $m=2$, we may choose $r=2$ in Corollary \ref{largecorintro} to show that for any $q$ squarefree and $\varphi(q)\ge 64$,
\begin{align*}
    \#\{(a_1,a_2) \in (\mathbb{Z}/q\mathbb{Z})^\times \times (\mathbb{Z}/q\mathbb{Z})^\times:\lim_{x \to \infty} \pi(x;q,(a_1,a_2))=\infty\} \ge \frac{1}{2016}\varphi(q)(\varphi(q)-1).
\end{align*}
In particular, when $\varphi(q)>4033$, we beat the lower bound in Proposition \ref{shiudirichlet}. This misses the conjectured lower bound of $\varphi(q)^{2}$ by a constant. Putting $r=\lceil \log m +1 \rceil$ in Corollary \ref{largecorintro}, we get
\begin{cor} \label{cor:largecorintrosimplified}
    If $q \ge 2$ is squarefree{} and $\varphi(q) > 8e^2 m\log m$, then for $m$ sufficiently large,
    $$\# \left\{ \mathbf{a} \in \prod_{i=1}^m (\mathbb{Z}/q\mathbb{Z})^\times:\lim_{x \to \infty} \pi(x;q,\mathbf{a})=\infty \right\} \gg e^{-O(m \log_2 m/\log m)} \varphi(q)^{m/\lceil \log m \rceil}.$$
\end{cor}
In both corollaries, we chose $r=\lceil \log m+1 \rceil$. Heuristically this is because for $m$ large, we have $M=\Theta(r^{1+o(1)} m^{1+1/r + o(1/r)})$. To minimise $M$, we choose $r \approx \log m$. %However, $r$ cannot be too large, since in $J_r(m,M)$ we allow up to $r-1$ consecutive values of $j(i)$ to be equal. As $\log m$ is much smaller than $m$, this is not an issue. 
%Additionally, if we instead allow $\varphi(q)$ to be even larger, we can choose other values of $r$ to obtain better lower bounds for the number of attainable residue patterns, but we do not pursue it here.
\section{Outline}
The proof of Theorem \ref{mainmodthm} has two main ingredients. First, in Section \ref{sec:modifiedmaynardtaosieve} we develop a modification of the Maynard--Tao sieve which guarantees the existence of several clusters of primes satisfying prescribed spacing conditions. Second, in Section \ref{sec:erdosrankin} we adapt the Erdős--Rankin construction so that these clusters consist of consecutive primes lying in prescribed residue classes modulo $q$. Combining these two ingredients yields Theorem \ref{mainmodthm}, from which the lower bounds for the number of attainable residue class patterns follow by a combinatorial argument in Section \ref{maincorsection}.\\

A finite set of integers $\mathcal{H}$ is said to be \emph{admissible} if for
every prime $p$,
\[
\#\left\{
n \Mod p :
\prod_{h\in\mathcal{H}} (n+h)\equiv 0 \Mod p
\right\}
<p.
\]
Fix $m,r\in\mathbb{Z}^+$ with $r>1$, and let $q$ be squarefree. Define the
parameter $M$ as in Definition \ref{defn:notationforthms}. For sufficiently large $N$, let
$W$ denote the usual product of small primes appearing in the Maynard--Tao
sieve. We also introduce the Maynard--Tao sieve weights
\[
w_n=
\left(
\sum_{\substack{d_1,\ldots,d_k\\ d_i\mid qn+h_i}}
\lambda_{d_1,\ldots,d_k}
\right)^2,
\]
where the coefficients $\lambda_{d_1,\ldots,d_k}$ are defined in
Section \ref{sec:modifiedmaynardtaosieve}. Our main sieve result (Proposition \ref{maynardtaosieve}) states that if
$\mathcal{H}=\{h_1,\ldots,h_k\}\subset [0,N]$ is an admissible $k$-tuple whose
pairwise differences are $\varepsilon\log N$-smooth, then one can choose a
residue class $b \Mod W$ satisfying suitable local congruence conditions so
that, for any partition
\[
\mathcal{H}=\mathcal{H}_1\cup\cdots\cup\mathcal{H}_M
\]
into equal-sized subsets, there exists $n\in[N,2N]$ for which $m+1$ of the sets
$qn+\mathcal{H}_i$ each contain between $1$ and $r-1$ primes, while every
intermediate block contains no primes.\\

The proof of Proposition \ref{maynardtaosieve} follows the ideas of \citet{banks2016limit} and \citet{merikoski2020limit}. Observe that it suffices to establish the positivity of the sum
\begin{align}
    S=\sum_{N<n \leq 2N} \Bigg( &\sum_{i=1}^k \bm{1}_{\mathbb{P}}(qn+h_i)-m(r-1) \notag{}\\
    &-(m(r-1)+r) \sum_{j=1}^M{} \sum_{\substack{h_{j_1},\ldots,h_{j_r} \in \mathcal{H}_j\\ j_1<\cdots<j_r}} \prod_{i=1}^r \bm{1}_{\mathbb{P}} (qn+h_{j_i}) \Bigg) w_n. \label{eqn:outlinekeysum}
\end{align}
To do this, in Lemma \ref{lem:maynardtaosieve} we first establish estimates for
the total weight, the weighted count of primes, and the weighted $r$-fold
correlations
\begin{align*}
    &\sum_{\substack{N<n\le 2N\\ n\equiv b\;(\bmod W)}} w_n,\\
&\sum_{\substack{N<n\le 2N\\ n\equiv b\;(\bmod W)}}
\mathbf{1}_{\mathbb{P}}(qn+h_j)\,w_n,\\
&\sum_{\substack{N<n\le 2N\\ n\equiv b\;(\bmod W)}}
\mathbf{1}_{\mathbb{P}}(qn+h_{j_1})
\cdots
\mathbf{1}_{\mathbb{P}}(qn+h_{j_r})\,w_n.
\end{align*}
These estimates are then combined to prove the positivity of \eqref{eqn:outlinekeysum}.\\

To apply Proposition \ref{maynardtaosieve}, Section \ref{sec:erdosrankin} develops a modified
Erd\H{o}s--Rankin construction. Given prescribed residue classes $r_1,...,r_k \Mod{q}$, in Lemma \ref{lemwithmod} we
construct an admissible tuple $\{h_1,\ldots,h_k\}$ with
$h_i\equiv r_i \Mod q$, whose pairwise differences are
$\varepsilon\log N$-smooth, together with the required residue class
$b\Mod W$. This construction also ensures that the primes detected by the
sieve are consecutive, completing the proof of Theorem \ref{mainmodthm}.\\

Finally, Section \ref{maincorsection} uses Theorem \ref{mainmodthm} in a recursive combinatorial
argument to obtain lower bounds for the number of residue class patterns
attained by infinitely many consecutive primes.
\section{Acknowledgements}
We would like to thank Jori Merikoski for suggesting this question, and for numerous helpful comments throughout the writing of this paper.
\section{Notation}
Throughout this paper, we use $\lfloor x \rfloor$ to denote the largest integer not greater than $x$, and $\lceil x \rceil$ to denote the least integer not less than $x$. We say $f \ll g$ and $f=O(g)$ when there exists a constant $C>0$ such that $|f(x)| \leq Cg(x)$ for $x$ sufficiently large. If the implied constant depends on parameter $\e$ say, then we write $f \ll_\e g $ or $f=O_\e(g)$. We use $f=o(g)$ to mean $\lim_{x \to \infty} f(x)/g(x)=0$.\\

Sums of the form $\sum_p$ range over primes, and $\mathbb{P}$ denotes the set of primes. We use $\bm{1}_{\mathbb{P}}(n)$ to denote the indicator function of whether $n \in \mathbb{P}$. Given integers $d_1,d_2$ we use $\gcd(d_1,d_2)$ or $(d_1,d_2)$ to denote the greatest common divisor of $d_1$ and $d_2$, and $\operatorname{lcm}(d_1,d_2)$ or $[d_1,d_2]$ to denote the least common multiple of $d_1$ and $d_2$. For a positive integer $q>1$, denote by $P^+(q)$ the largest prime factor of $q$. We use $\varphi(q)$ to denote the Euler totient function of $q$. Given integers $k$ and $n$, $\log_kn$ denotes the $k$-fold iterated logarithm of $n$ in base $e$, for example $\log_1 n=\log n$ and $\log_2n=\log \log n$.

\section{A Modified Maynard--Tao Sieve} \label{sec:modifiedmaynardtaosieve}
In order to use the methods of \citet{banks2016limit}, we need the following results.
\begin{lem} \label{exceptionalzero}
    Let $T \geq 3$ and $P \geq T^{1/\log_2 T}$. Among all primitive characters $\chi \Mod{q}$ with $q \leq T$ and $P^+(q) \leq P$, there exists at most one such character such that $L(s,\chi)$ has a zero in the region
    $$\Re(s)>1-\frac{c}{\log P}, \quad |\Im(s)| \leq \exp \left( \log P/\sqrt{\log T} \right), $$
    where $c$ is a positive absolute constant. If this character $\chi \Mod{q}$ exists and is real, then $L(s,\chi)$ has precisely one zero in the above region, which is simple and real, and satisfies
    $$P^+(q) \gg \log q \gg \log_2 T.$$
\end{lem}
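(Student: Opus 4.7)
The plan is to follow the classical Page--Landau argument for exceptional zeros of Dirichlet $L$-functions, refined so that the key bound scales with $\log P$ rather than the usual $\log q$; this refinement is available because every modulus in play is assumed to be $P$-smooth. The lemma is of a type appearing in sieve-theoretic work on gaps between primes (e.g.\ \citet{banks2016limit}), and I would model the proof on those sources and on the exceptional character treatment in Iwaniec--Kowalski.

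For the at-most-one statement, I would argue by contradiction: suppose two distinct primitive characters $\chi_1 \pmod{q_1}$ and $\chi_2 \pmod{q_2}$, with $q_i \leq T$ and $P^+(q_i) \leq P$, each have a zero $\rho_j = \beta_j + i\gamma_j$ in the stated region. Following Page's recipe, I would apply the trigonometric positivity identity $3 + 4\cos\theta + \cos 2\theta \geq 0$ to the logarithmic derivative of an auxiliary product
\[
Z(s) = \zeta(s)^{a_0} \prod_{j,k} L(s, \chi_1^j \chi_2^k)^{a_{j,k}},
\]
with non-negative integer exponents chosen so that $-Z'/Z$ has non-negative Dirichlet coefficients. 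Evaluating $-\Re(Z'/Z)(\sigma + it)$ at a real point $\sigma$ slightly above $1$ and at $t$ near $\gamma_j$, the contributions of $\rho_1, \rho_2$ in the Hadamard sum over zeros force a lower bound of order $(\sigma - \beta_j)^{-1}$, while the matching upper bound comes from the Dirichlet series expansion for $-Z'/Z$.

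The main obstacle is to show that the upper bound can be made $O(\log P)$ rather than the classical $O(\log q) = O(\log T)$. Since every conductor appearing in $Z(s)$ divides $[q_1, q_2]$, which is $P$-smooth, I would split the sum over primes in $-L'/L(s, \chi)$ at $p = P$: for $p > P$ the character is unramified and the local behaviour matches $\zeta(s)$, whereas the $p \leq P$ contribution is controlled by $\sum_{p \leq P} \log p / p \ll \log P$. The imaginary-part cut-off $|\Im(s)| \leq \exp(\log P / \sqrt{\log T})$ enters through Borel--Carath\'eodory-type estimates needed to move off the real axis in the standard $\log L(s, \chi)$ bound, with the disc radius calibrated to $\log P/\sqrt{\log T}$. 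Balancing the two sides yields $1 - \beta_j \gg 1/\log P$, a contradiction if the constant $c$ is small enough.

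The remaining clauses follow as standard corollaries. If the exceptional $\chi$ is real, then $L(s, \chi) = L(s, \bar\chi)$, so a non-real zero $\rho$ would produce a second zero $\bar\rho$ in the region attached to the same $\chi$, contradicting the uniqueness just established; this forces $\rho$ to be real, and simplicity follows by reapplying the Page-style argument to $L(s, \chi)^2 \zeta(s)$. Finally, $P^+(q) \gg \log q \gg \log_2 T$ emerges by applying the improved zero-free region with $P$ replaced by $P^+(q)$ and combining with the Landau lower bound $1 - \beta \gg q^{-1/2} \log^{-2} q$ for real exceptional zeros, together with the hypothesis $P \geq T^{1/\log_2 T}$: these force $q$ (and hence $\log q$) to grow at least like $\log_2 T$, while a sharper version of the same comparison, exploiting that the Euler factors at primes $p \nmid q$ above $P^+(q)$ make $L(s, \chi)$ behave like $\zeta(s)$ near $s = 1$, yields the lower bound on $P^+(q)$.
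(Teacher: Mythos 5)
The paper does not actually prove this lemma: the stated proof is a one-line citation to Lemma~4.1 of \citet{banks2016limit}. So there is no internal argument to compare your sketch against; what follows assesses it on its own terms.

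Your plan is in the right family --- a Page--Landau positivity argument using an auxiliary product and the $3+4\cos\theta+\cos 2\theta$ inequality --- but it leaves a genuine gap at exactly the point where the lemma stops being classical, namely where $\log P$ is supposed to replace $\log q$. After Hadamard factorization and the functional equation one has, for primitive $\chi \Mod{q}$ and $\sigma>1$,
$$-\Re\frac{L'}{L}(\sigma+it,\chi)\le \tfrac{1}{2}\log q + O\bigl(\log(|t|+2)\bigr) - \sum_{\rho}\Re\frac{1}{\sigma+it-\rho},$$
and one drops all but one of the nonnegative summands over zeros. The conductor term $\tfrac{1}{2}\log q$ is forced by the functional equation: it does not shrink because every prime factor of $q$ is $\le P$, and it is precisely what produces the classical $1/\log(qT)$ zero-free region rather than $1/\log P$. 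Your sketch correctly controls the Dirichlet-series side ($-\zeta'/\zeta(\sigma)\ll 1/(\sigma-1)\ll\log P$ once $\sigma-1\asymp 1/\log P$), the ramified Euler factors ($\sum_{p\mid q}\log p/p\ll\log P$), and the $\Gamma$-factor contribution (which the imaginary-part cutoff keeps at $O(\log P)$). But nothing in it touches the $\tfrac{1}{2}\log q$ term, and the remark that for $p>P$ ``the local behaviour matches $\zeta(s)$'' is not a substitute: $\chi(p)$ is an arbitrary root of unity for $p\nmid q$, and in any case the conductor term comes from the archimedean/functional-equation side, not from the Euler product. Bridging this gap is the real content of BFM Lemma~4.1 and requires a mechanism adapted to factored (smooth) moduli of the type going back to Gallagher; your proposal does not supply it.

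A smaller issue: your derivation of $P^+(q)\gg\log q\gg\log_2 T$ is more elaborate than needed and partly misdirected. For a real primitive $\chi$ the conductor $q$ is, apart from a factor $2^a$ with $a\le 3$, squarefree, and every prime divisor is $\le P^+(q)$; hence $q\ll\prod_{p\le P^+(q)}p$ and $\log q\ll P^+(q)$ by Chebyshev, with no zero-free region needed. The bound $\log q\gg\log_2 T$ then follows by comparing $1-\beta<c/\log P\ll(\log_2 T)/\log T$ (here using $P\ge T^{1/\log_2 T}$) with Page's unconditional $1-\beta\gg q^{-1/2}(\log q)^{-2}$ for a real exceptional zero, which forces $q$ to exceed a fixed power of $\log T$ and therefore $\log q\gg\log_2 T$. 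The Euler-factor heuristic in your final sentence does not yield either inequality.
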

\begin{proof}
    This is \cite[Lemma 4.1]{banks2016limit}.
\end{proof}
We fix the absolute constant $c$ in \citet{banks2016limit} and define $Z_T=P^+(q)$ if such exceptional modulus $q$ exists, and set $Z_T=1$ otherwise.
\begin{thm}[Modified Bomberi--Vinogradov] \label{modifiedbombierivinogradov}
    Let $N>2$. Fix any $C>0$,  $\theta=1/2-\delta \in (0,1/2)$ and $\e>0$. Suppose $q_0$ is a squarefree integer with $q_0<N^\e$ and $P^+(q_0)<N^{\e/\log_2 N}$. If $\e$ is sufficiently small in terms of $C,\delta,c$ in Lemma \ref{exceptionalzero}, then with $Z_{N^{2\e}}$ as above we have
    $$\sum_{\substack{q<N^\theta\\ q_0 \mid q\\ (q,Z_{N^{2\e}})=1}} \max_{(q,a)=1} \left| \psi(N;q,a)-\frac{\psi(N)}{\varphi(q)} \right| \ll_{\delta,C} \frac{N}{\varphi(q_0) (\log N)^C}.$$
\end{thm}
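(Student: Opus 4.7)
The plan is to adapt the classical proof of the Bombieri-Vinogradov theorem (as in Davenport, \textit{Multiplicative Number Theory}, Chapter 28) with two modifications tailored to our situation: we restrict the moduli to multiples of $q_0$ to produce the $1/\varphi(q_0)$ saving, and we exclude the potentially exceptional character via the coprimality condition $(q, Z_{N^{2\varepsilon}})=1$, which lets us avoid any appeal to Siegel's ineffective theorem. First I would apply character orthogonality to write
$$\psi(N;q,a) - \frac{\psi(N)}{\varphi(q)} = \frac{1}{\varphi(q)} \sum_{\substack{\chi \Mod q\\ \chi\neq\chi_0}} \bar{\chi}(a)\,\psi(N,\chi),$$
pass to the primitive character $\chi^*$ of conductor $q^*\mid q$ inducing each $\chi$ (at negligible cost), factor $q=q_0 q_1$, and use $\varphi(q_0 q_1)\geq \varphi(q_0)\varphi(q_1)$ (valid because $q_0$ is squarefree) to extract the $1/\varphi(q_0)$ factor from the weight $1/\varphi(q)$.

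Next I would decompose each $\psi(N,\chi^*)$ into Type I and Type II bilinear sums via the Vaughan (or Heath-Brown) identity, then invoke Gallagher's large sieve inequality for primitive characters. Since $\theta=1/2-\delta<1/2$, the standard output
$$\sum_{q^*\leq N^\theta}\frac{q^*}{\varphi(q^*)}\sum_{\chi^*\text{ prim.\ mod }q^*}\max_{y\leq N}|\psi(y,\chi^*)| \ll_\delta N(\log N)^{O(1)}$$
is more than enough to dispatch the contribution of moduli $q_1>(\log N)^B$ for $B=B(C,\delta)$ sufficiently large, even after dividing by $\varphi(q_0)$.

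For the small moduli $q_1\leq (\log N)^B$ I would apply Lemma \ref{exceptionalzero} with $T=N^{2\varepsilon}$ and $P=N^{\varepsilon/\log_2 N}$. The hypothesis $(q,Z_{N^{2\varepsilon}})=1$ kills the possible exceptional character (whose primitive conductor would have to divide $q$), while the hypotheses $P^+(q_0)\leq N^{\varepsilon/\log_2 N}$ and $q_1\leq(\log N)^B$ guarantee that every primitive conductor $q^*\mid q_0 q_1$ arising satisfies $q^*\leq T$ and $P^+(q^*)\leq P$ once $N$ is large. Lemma \ref{exceptionalzero} then supplies a zero-free region of width $c/\log P = c\log_2 N/(\varepsilon\log N)$ for each $L(s,\chi^*)$, and the usual explicit formula yields the Siegel--Walfisz-type bound $\psi(N,\chi^*)\ll N/(\log N)^{c/\varepsilon}$ with an effective implied constant.

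The main obstacle is the parameter bookkeeping: one has to check that every primitive conductor $q^*$ appearing in the small-moduli range genuinely satisfies the hypotheses of Lemma \ref{exceptionalzero} with the chosen $T,P$ (this is exactly what the two hypotheses on $q_0$ enforce), and then shrink $\varepsilon$ in terms of $C,\delta$ and the absolute constant $c$ so that $(\log N)^{-c/\varepsilon}$ beats $(\log N)^{-C}$ by enough of a margin to absorb the divisor-type factors and the $1/\varphi(q_0)$ normalization. No essentially new ingredient beyond those of \citet{banks2016limit} is needed; the work is to verify that the factor $q_0$ propagates cleanly through the Vaughan decomposition and the large sieve.
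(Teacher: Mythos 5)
The paper proves nothing here: it imports the statement verbatim from Banks--Freiberg--Maynard and cites their Theorem~4.2, so there is no in-house argument for you to match. Your reconstruction is nonetheless the right shape, and close to what the cited source does: Vaughan's identity plus Gallagher's large sieve handle the bulk of the moduli, the factoring $q = q_0 q_1$ together with $\varphi(q_0 q_1) \geq \varphi(q_0)\varphi(q_1)$ (valid because $q_0$ is squarefree) extracts the $1/\varphi(q_0)$ saving, and Lemma~\ref{exceptionalzero} replaces Siegel--Walfisz with an \emph{effective} zero-free region for the small conductors that survive the coprimality condition $(q, Z_{N^{2\varepsilon}}) = 1$. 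That is precisely how the BFM argument avoids an ineffective constant.

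One piece of your bookkeeping needs repair, and it is exactly the kind you flagged as delicate. You invoke Lemma~\ref{exceptionalzero} with $T = N^{2\varepsilon}$ and $P = N^{\varepsilon/\log_2 N}$. But the lemma requires $P \geq T^{1/\log_2 T}$, and here
$$T^{1/\log_2 T} = \exp\!\left(\frac{2\varepsilon \log N}{\log(2\varepsilon\log N)}\right) = N^{(2+o(1))\,\varepsilon/\log_2 N},$$
which exceeds your $P$ once $N$ is large; the hypothesis fails by essentially a factor of $2$ in the exponent, and shrinking $\varepsilon$ does not fix this since both sides scale the same way. Take instead $P = T^{1/\log_2 T}$ (or any $P \geq N^{2\varepsilon/\log_2 N}$), which is the implicit choice in the definition of $Z_T$. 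The given hypothesis $P^+(q_0) < N^{\varepsilon/\log_2 N}$, together with $q_1 \leq (\log N)^B$, still forces every conductor $q^* \mid q_0 q_1$ to satisfy $P^+(q^*) < P$ and $q^* < N^{\varepsilon}(\log N)^B < T$, so the application of the lemma goes through once $T$ and $P$ are aligned.
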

\begin{proof}
    This is \cite[Theorem 4.2]{banks2016limit}.
\end{proof}
Given a squarefree{} integer $q \ge 2$ and an admissible tuple $(h_1,\ldots,h_k)$, define the set
$$\mathcal{H}(n)=\{qn+h_1,\ldots,qn+h_k\}.$$
We define the sieve weights $\lambda_{d_1,\ldots,d_k}$ the same way as \cite{banks2016limit}, i.e. 
$$\lambda_{d_1,\ldots,d_k}=\left\{ \begin{aligned}
    \left( \prod_{i=1}^k \mu(d_i) \right) \sum_{j=1}^J \prod_{\ell=1}^k F_{\ell,j} \left( \frac{\log d_\ell}{\log N} \right), &\quad \text{ if } \gcd(d_1 \cdots d_k,Z_{N^{4 \e}})=1\\
    0, &\quad \text{ otherwise}
\end{aligned} \right.$$
for some fixed $J$, where $F_{\ell,j}:[0,\infty) \to \mathbb{R}$ are fixed smooth compactly supported functions that are not identically zero, with support condition
$$\sup \left\{ \sum_{\ell=1}^k t_\ell:\prod_{\ell=1}^k F_{\ell,j}(t_\ell) \neq 0 \right\} \leq \delta$$
for all $j=1,2,\ldots,J$ and some small $\delta>0$. Let
$$F(t_1,\ldots,t_k) := \sum_{j=1}^J \prod_{\ell=1}^k F_{\ell,j}'(t_\ell),$$
where $F_{\ell,j}'$ denotes the derivative of $F_{\ell,j}$. We also assume $F_{\ell,j}$ are chosen such that $F(t_1,\ldots,t_k)$ is symmetric. Since $J$ and $F_{\ell,j}$ for $1 \le \ell \le k$ and $1 \le j \le J$ are fixed, we have $\lambda_{d_1,\ldots,d_k} \ll 1$ uniformly in $d_1,\ldots,d_k$. We further define
$$w_n = \left( \sum_{\substack{d_1,\ldots,d_k\\d_i \mid qn+h_i}} \lambda_{d_1,\ldots,d_k} \right)^2,$$
and for $\e>0$ define $$W=\prod_{\substack{p \leq \e \log N\\ p \nmid Z_{N^{4 \e}}}} p, \quad B=\frac{\varphi(W)}{W} \log N.$$
We remark here, for $N$ sufficiently large in terms of $q$, we have $q \mid W$, so
$$\frac{\varphi(qW)}{qW}=\frac{\varphi(W)}{W}.$$
 We define the following quantities for $r \in \mathbb{Z}^+$ and $k \geq r$
\begin{align*}
    I_k(F) &:= \int_0^\infty \cdots \int_0^\infty F(t_1,\ldots,t_k) \d t_1 \cdots \d t_k,\\
    J_k^{(r)}(F) &:= \int_0^\infty \cdots \int_0^\infty \left( \int_0^\infty \cdots \int_0^\infty F(t_1,\ldots,t_k) \d t_{k-r+1} \cdots \d t_k \right)^2 \d t_1 \cdots \d t_{k-r}.
\end{align*}
\begin{lem} \label{mobiusinvlem}
    Let $N$ be sufficiently large in terms of $q$. If $F_1,\ldots,F_k,G_1,\ldots,G_k:[0,\infty) \to \mathbb{R}$ are compactly supported functions, then
    $$\sideset{}{'}\sum_{\substack{d_1,\ldots,d_k\\ d_1',\ldots,d_k'}} \prod_{j=1}^k \frac{\mu(d_j) \mu(d_j')}{[d_j,d_j']} F_j \left( \frac{\log d_j}{\log N} \right) G_j \left( \frac{\log d_j'}{\log N} \right)=(c+o(1)) B^{-k},$$
    where $\sum'$ denotes the additional restriction of $[d_1,d_1'],\ldots,[d_k,d_k'],qWZ_{N^{4 \e}}$ being pairwise coprime, and
    $$c=\prod_{j=1}^k \int_0^\infty F_j'(t_j) G_j'(t_j) \d t_j.$$
    The same result holds if $[d_j,d_j']$ are replaced by $\varphi([d_j,d_j'])$, i.e.
    \[
    \sideset{}{'}\sum_{\substack{d_1,\ldots,d_k\\ d_1',\ldots,d_k'}} \prod_{j=1}^k \frac{\mu(d_j) \mu(d_j')}{\varphi([d_j,d_j'])} F_j \left( \frac{\log d_j}{\log N} \right) G_j \left( \frac{\log d_j'}{\log N} \right)=(c+o(1)) B^{-k}.
    \]
\end{lem}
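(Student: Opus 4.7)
The plan is to adapt the standard Mellin-transform/Dirichlet-series approach of \cite[Lemma 5.1]{banks2016limit} and \cite{maynard2016dense} to allow two independent smooth weights $F_j, G_j$ on each pair $(d_j, d_j')$. Since each $F_j, G_j$ is smooth and compactly supported, I would first represent them via Laplace transforms:
$$F_j\!\left(\frac{\log d_j}{\log N}\right) = \frac{1}{2\pi i}\int_{(c_0)} \widehat{F_j}(s)\, d_j^{-s/\log N} \d s,$$
and similarly for $G_j$, where $\widehat{F_j}, \widehat{G_j}$ are entire and decay faster than any polynomial along vertical lines. Substituting and interchanging sums with contour integrals rewrites the original sum as a $2k$-fold contour integral of the Dirichlet series
$$D(\mathbf{s}, \mathbf{s}') := \sideset{}{'}\sum_{\mathbf{d},\mathbf{d}'} \prod_{j=1}^k \frac{\mu(d_j) \mu(d_j')}{[d_j, d_j']} d_j^{-s_j} d_j'^{-s_j'}.$$

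The pairwise coprimality of $[d_j, d_j']$ together with joint coprimality to $qWZ_{N^{4\e}}$ implies that $D$ factors as an Euler product. For $p \nmid qWZ_{N^{4\e}}$ the local factor equals $1 + \sum_{j=1}^k \bigl(-p^{-1-s_j} - p^{-1-s_j'} + p^{-1-s_j-s_j'}\bigr)$; for $p \mid qWZ_{N^{4\e}}$ the factor is $1$. Comparing with the zeta-ratio product $Z(\mathbf{s},\mathbf{s}') := \prod_{j=1}^k \zeta(1+s_j+s_j')/[\zeta(1+s_j)\zeta(1+s_j')]$, the per-prime ratio is $1+O(1/p^2)$ near the origin, so $H := D/Z$ extends to an analytic function on a neighborhood of $\mathbf{0}$, with $H(\mathbf{0},\mathbf{0})$ contributing the factor $(W/\varphi(W))^k$ up to $(1+o(1))$ corrections from $q$ and $Z_{N^{4\e}}$ under our assumptions on $N$. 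I would then shift each of the $2k$ contours past the origin, which is permissible by Lemma \ref{exceptionalzero} and classical zero-free region estimates applied away from $Z_{N^{4\e}}$. After rescaling $s_j \mapsto s_j/\log N$ and using $\zeta(1+s) \sim 1/s$ and $1/\zeta(1+s) \sim s$ near $0$, each factor of the zeta ratio contributes $(\log N)^{-1}$ together with a residual double integral that collapses, via the identities $\sigma \widehat{F_j}(\sigma) = \widehat{F_j'}(\sigma)$ and $1/(\sigma+\sigma') = \int_0^\infty e^{-(\sigma+\sigma')t}\d t$, to exactly $\int_0^\infty F_j'(t) G_j'(t)\d t$; assembling the $k$ factors yields the claimed main term $(c + o(1)) B^{-k}$.

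The $\varphi([d_j,d_j'])$ variant is handled identically: each local Euler factor acquires a bounded $(1-1/p)^{-1}$ correction at nontrivial terms, modifying $H$ by an analytic factor that equals $1$ at the origin and therefore leaves the main term unchanged. The principal technical obstacle is ensuring the contour-shift error is genuinely $o(B^{-k})$ uniformly in $N$; this relies on both the rapid vertical decay of $\widehat{F_j}, \widehat{G_j}$ (to control high-frequency contributions) and the excision of the exceptional zero via $Z_{N^{4\e}}$ (to guarantee $\zeta(1+s)$ is nonvanishing on the shifted contour). Once this is established, the shifted-contour error is $O(\exp(-c\sqrt{\log N}))$ and hence negligible compared to $B^{-k}$.
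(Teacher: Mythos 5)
The paper's own proof is a one-liner: once $N$ is large enough that $q \mid W$ and $Z_{N^{4\e}} > P^+(q)$, the restriction that $[d_1,d_1'],\ldots,[d_k,d_k'],\,qWZ_{N^{4\e}}$ be pairwise coprime is identical to the restriction with $WZ_{N^{4\e}}$ in place of $qWZ_{N^{4\e}}$, so the statement reduces verbatim to \cite[Lemma~4.5]{banks2016limit}. You instead reconstruct that BFM lemma from scratch via a $2k$-fold Perron/Mellin contour argument: factor the doubly indexed M\"obius sum into an Euler product, compare it to the zeta ratio $\prod_j \zeta(1+s_j+s_j')/[\zeta(1+s_j)\zeta(1+s_j')]$, show the quotient $H$ is analytic and equals $(1+o(1))(W/\varphi(W))^k$ at the origin, and then shift contours to pick up the residues. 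That computation is essentially the Selberg--Delange style evaluation behind all GPY/Maynard sieve main terms, the Euler-factor identification is correct, and the final collapse using $\sigma\widehat{F_j}(\sigma)=\widehat{F_j'}(\sigma)$ and $1/(\sigma+\sigma')=\int_0^\infty e^{-(\sigma+\sigma')t}\,\d t$ does produce $\prod_j\int_0^\infty F_j'G_j'\,\d t$, so as an architecture your argument works. The paper's route is vastly shorter because it recognizes that nothing genuinely new is being proved once $q\mid W$; yours is self-contained and would be the way to go if one could not simply cite BFM.

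One concrete conceptual slip: you justify the contour shift and the nonvanishing of $\zeta(1+s)$ on the shifted contour by invoking Lemma~\ref{exceptionalzero} and ``the excision of the exceptional zero via $Z_{N^{4\e}}$.'' That is the wrong tool. Lemma~\ref{exceptionalzero} concerns a possible Siegel zero of a Dirichlet $L$-function of some primitive character; the Riemann zeta function appearing in your zeta ratio has no exceptional zero, and its zero-free region near $\Re(s)=1$ is classical and unconditional. The coprimality restriction to $Z_{N^{4\e}}$ exists in this framework so that the \emph{Bombieri--Vinogradov} input (Theorem~\ref{modifiedbombierivinogradov}), used in the later prime-counting sums of Lemma~\ref{lem:maynardtaosieve}, avoids the exceptional modulus; in the present purely combinatorial Möbius sum it merely deletes at most one prime from the Euler product and changes $H(\mathbf{0})$ by a factor $1+O(1/\log_2 N)$. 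Relatedly, your Laplace-transform representation of $F_j,G_j$ tacitly requires them to be at least piecewise smooth; the lemma statement says only ``compactly supported,'' but since $F_j',G_j'$ appear in the definition of $c$ this is implicitly assumed and is harmless. Neither point is a fatal gap, but both should be corrected if this proof were to stand on its own.
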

\begin{proof}
    If $N$ is sufficiently large in terms of $q$ such that $Z_{N^{4 \e}}>P^+(q)$ and $q \mid W$, then the additional restriction is the same as saying $[d_1,d_1'],\ldots,[d_k,d_k'],WZ_{N^{4 \e}}$ being pairwise coprime, which is just \cite[Lemma 4.5]{banks2016limit}.
\end{proof}
We have an estimate for $J_k^{(r)}(F)$ in terms of $I_k(F)$.
\begin{lem} \label{integralestimate}
    Let $0<\rho<1$ and $r \in \mathbb{Z}^+$ with $2 \leq r \leq k$. Then there is a fixed choice of $J$ and $F_{\ell,j}$ for $\ell \in \{1,2,\ldots,k\}$ and $j \in \{1,2,\ldots,J\}$ with the required properties such that
    \begin{align*}
        J_k^{(1)}(F) &\geq (1+O((\log k)^{-1/2})) \left( \frac{\rho \delta \log k}{k} \right) I_k(F),\\
        J_k^{(r)}(F) &\leq (1+O((\log k)^{-1/2})) \left( \frac{\rho \delta \log k}{k} \right)^r I_k(F).
    \end{align*}
\end{lem}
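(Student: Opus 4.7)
The approach is to adapt the Maynard-Tao sieve optimization from Maynard's ``Dense clusters of primes in subsets'' (which gives the $r=1$ lower bound) and from Banks-Freiberg-Maynard (which handles the $r=2$ case), then extend the higher-moment computation to general $r\geq 2$. Consider the symmetric function
$$F(t_1,\ldots,t_k) = \prod_{\ell=1}^k g(t_\ell)\cdot\chi\!\left(\sum_{\ell=1}^k t_\ell\right),$$
where $g:[0,T]\to[0,1]$ is a smooth approximation to $(1+At)^{-1}\mathbf{1}_{[0,T]}(t)$ for parameters $A,T$ to be chosen, and $\chi$ is a smooth approximation to $\mathbf{1}_{[0,\delta]}$. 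To realize $F$ in the required form $\sum_{j=1}^J \prod_{\ell=1}^k F_{\ell,j}(t_\ell)$, I would approximate $\chi$ uniformly by a finite Chebyshev sum on $[0,kT]$, splitting the cutoff into separable terms with arbitrarily small $L^\infty$ error.

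Setting $a=\int_0^T g$ and $b=\int_0^T g^2$, and letting $U_i$ and $V_i$ be i.i.d.\ random variables with densities $g/a$ and $g^2/b$ respectively, a direct expansion yields
\begin{align*}
I_k(F) &= a^k\,\mathbb{P}\!\left(\textstyle\sum_{i=1}^k U_i \leq \delta\right)\cdot(1+o(1)),\\
J_k^{(r)}(F) &= a^{2r}\,b^{k-r}\,\mathbb{E}\!\left[\mathbb{P}\!\left(\textstyle\sum_{i=1}^r U_i \leq \delta - S \,\middle|\, V\right)^{\!2}\right]\cdot(1+o(1)),
\end{align*}
where $S=V_1+\cdots+V_{k-r}$. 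The parameters $A,T$ are chosen, following Maynard, so that $AT$ grows with $k$ at the rate which puts $\sum_{i=1}^k U_i$ in marginal contact with the cutoff $\delta$. With this tuning, $b/a$ becomes of order $(\rho\log k)^{-1}$, and the probability factors from $\chi$ contribute the appropriate logarithmic corrections.

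The first bound $J_k^{(1)}/I_k\geq(1+O((\log k)^{-1/2}))(\rho\delta\log k/k)$ is Maynard's original estimate, rescaled from the unit simplex to the simplex $\sum t_\ell\leq \delta$ (the rescaling accounts for the extra factor of $\delta$). The upper bound $J_k^{(r)}/I_k\leq(1+O((\log k)^{-1/2}))(\rho\delta\log k/k)^r$ for $r\geq 2$ follows by multiplying the asymptotic expansions of $a^{2r}$, $b^{k-r}$, and the conditional probability factor, then using moderate-deviation bounds for $S$ (under density $g^2/b$) to evaluate $\mathbb{E}[\mathbb{P}(\sum U_i\leq\delta-S)^2]$ to leading order.

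The main obstacle is establishing the upper bound for $r\geq 2$, since generically the ratio $J_k^{(r)}/I_k$ can exceed $(J_k^{(1)}/I_k)^r$ by an exponentially large factor; the required saturation is delicate and is engineered only by Maynard's specific choice of $g$ combined with the marginal cutoff $\chi$, which forces the sums $\sum U_i$ and $S$ to have matching concentration scales of order $\sqrt{k/\log k}$. The $1+O((\log k)^{-1/2})$ error in the final bound reflects the widths of these concentrations, so a careful central-limit-type control of the tails of both partial sums will be needed to achieve the stated precision.
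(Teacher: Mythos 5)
Your choice of test function is the right one (it is essentially the same $F_k = \prod_i g(kt_i)$, $g(t) = (1+At)^{-1}$ on $[0,T]$, truncated to a simplex, that Banks--Freiberg--Maynard and this paper use), and casting $I_k$ and $J_k^{(r)}$ in terms of the random variables $U_i \sim g/a$ and $V_i \sim g^2/b$ is a faithful reformulation. But the heart of the lemma --- the upper bound on $J_k^{(r)}$ for $r \geq 2$ --- is precisely the place where your proposal stops being a proof: you identify it yourself as ``the main obstacle'' and defer it to unspecified ``moderate-deviation bounds'' and ``central-limit-type control of the tails.'' That is a plan, not an argument, and it is also heavier machinery than the problem needs.

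The paper settles the upper bound with an elementary pointwise inequality, \emph{uniform in the outer variables}: for every $x \geq 0$,
\begin{equation*}
\left( \int_{\substack{t_1+\cdots+t_r \leq x \\ t_i \geq 0}} g(t_1)\cdots g(t_r)\,\d t_1 \cdots \d t_r \right)^{\!2} \;\leq\; (\log k)^r \int_{\substack{t_1+\cdots+t_r \leq x \\ t_i \geq 0}} g(t_1)^2\cdots g(t_r)^2\,\d t_1\cdots \d t_r.
\end{equation*}
This is proved by a case split: for $x \leq \log k$ it is Cauchy--Schwarz on the $r$-simplex (whose volume is $O(x^r)$); for $x \geq \log k$ one normalizes $\int_0^\infty g = 1$ so the left side is at most $1$, and a short nested-integral computation using $\int_0^1 (1+At)^{-2}\d t = (A+1)^{-1}$ shows the right side is at least $(\log k)^{-r} \cdot (\log k)^r = 1$. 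Substituting $x = 1 - \sum_{i \leq k-r} t_i$ and integrating over the outer simplex gives $J_k^{(r)}(F_k) \leq (\log k/k)^r I_k(F_k)$ directly; the factors of $\rho\delta$ and the $1+O((\log k)^{-1/2})$ errors then come from the Stone--Weierstrass approximation and rescaling $t_i \mapsto \rho\delta t_i$, exactly as you anticipate. In your language, the paper bounds $\mathbb{P}(\sum_{i \leq r} U_i \leq \delta - S)^2$ pointwise in $S$ by a constant multiple of $\mathbb{P}(\sum_{i \leq r} V_i \leq \delta - S)$, so no distributional control of $S$ is needed at all. Your remark that $J_k^{(r)}/I_k$ can generically exceed $(J_k^{(1)}/I_k)^r$ exponentially is a red herring here, since the comparison being proved is not between $J_k^{(r)}$ and $(J_k^{(1)})^r$ but between $J_k^{(r)}$ and $I_k$ directly, via this uniform inner bound. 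Unless you supply the concrete inequality replacing ``moderate deviations,'' the proposal has a genuine gap at its central step.
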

\begin{proof}
    The proof is similar to \cite[Lemma 4.7]{banks2016limit}. The result is trivial if $k$ is bounded, so assume $k$ is sufficiently large. Define $F_k=F_k(t_1,\ldots,t_k)$ by
    \begin{align*}
        F_k(t_1,\ldots,t_k)&=\left\{ \begin{aligned}
            \prod_{i=1}^k g(kt_i), &\quad \text{ if } \sum_{i=1}^k t_i \leq 1,\\
            0, &\quad \text{ otherwise},
        \end{aligned} \right.\\
        g(t) &= \left\{ \begin{aligned}
            \frac{1}{1+At}, &\quad \text{ if } t \in [0,T],\\
            0, &\quad \text{ otherwise},
        \end{aligned} \right.\\
        A &= \log k-2\log_2 k,\\
        T &= \frac{e^A-1}{A}.
    \end{align*}
    The first assertion follows from \cite[Lemma 4.7]{banks2016limit}. For the second assertion, we will first prove that for all $x \ge 0$ we have
    \begin{align}
        \left( \int_{t_1+\cdots+t_r \leq x} g(t_1) \cdots g(t_r) \d t_1 \cdots \d t_r \right)^2 &\leq (\log k)^r \int_{t_1 + \cdots+t_r \leq x} g(t_1)^2 \cdots g(t_r)^2 \d t_1 \cdots \d t_r. \label{eqn:intermediateintegralofg}
    \end{align}
    For $0 \leq x \leq \log k$, by Cauchy--Schwarz we have
    \begin{align*}
        \left( \int_{t_1+\cdots+t_r \leq x} g(t_1) \cdots g(t_r) \d t_1 \cdots \d t_r \right)^2 &\leq \frac{x^r}{2^r} \int_{t_1 + \cdots+t_r \leq x} g(t_1)^2 \cdots g(t_r)^2 \d t_1 \cdots \d t_r\\
        &\leq (\log k)^r \int_{t_1 + \cdots+t_r \leq x} g(t_1)^2 \cdots g(t_r)^2 \d t_1 \cdots \d t_r.
    \end{align*}
    We now prove \eqref{eqn:intermediateintegralofg} for $x\ge\log k$. Let $y=\min(x,T)$ and note $\log(1+Ay) \leq A$. Hence
    \begin{align*}
        \int_{t_1+\cdots+t_r \leq x} g(t_1)^2 \cdots g(t_r)^2 &\d t_1 \cdots \d t_r\\
        &\geq \int_{t_1+\cdots+t_r \leq y} g(t_1)^2 \cdots g(t_r)^2 \d t_1 \cdots \d t_r\\
        &= \int_{\substack{t_1+\cdots+t_{r-1} \leq y\\ t_1,\ldots,t_{r-1} \geq 0}} \frac{1}{(1+At_1)^2} \cdots \frac{1}{(1+At_{r-1})^2} \d t_1 \cdots \d t_{r-1}\\
        &\quad\times \int_{\substack{0 \leq t_r \leq y-(t_1+\cdots+t_{r-1})}} \frac{1}{(1+At_r)^2} \d t_r\\
        &\geq \int_{\substack{t_1+\cdots+t_{r-1} \leq y-1\\ t_1,\ldots,t_{r-1} \geq 0}} \frac{1}{(1+At_1)^2} \cdots \frac{1}{(1+At_{r-1})^2} \d t_1 \cdots \d t_{r-1}\\
        &\quad\times \int_{\substack{0 \leq t_r \leq 1}} \frac{1}{(1+At_r)^2} \d t_r\\
        &= \frac{1}{A+1}\int_{\substack{t_1+\cdots+t_{r-1} \leq y-1\\ t_1,\ldots,t_{r-1} \geq 0}} \frac{1}{(1+At_1)^2} \cdots \frac{1}{(1+At_{r-1})^2} \d t_1 \cdots \d t_{r-1}.
    \end{align*}
    Since $y \geq \log k \geq r$ for $k$ sufficiently large, we can do this $r-1$ more times, and we get
    \begin{align*}
        \int_{t_1+\cdots+t_r \leq x} g(t_1)^2 \cdots g(t_r)^2 \d t_1 \cdots \d t_r &\geq \frac{1}{(A+1)^r} \geq \frac{1}{(\log k)^r}
    \end{align*}
    for $k$ sufficiently large. Since the integral of $g$ over $[0,\infty)$ is 1, for all $x \geq 0$ we have
    \begin{align*}
        \left( \int_{t_1+\cdots+t_r \leq x} g(t_1) \cdots g(t_r) \d t_1 \cdots \d t_r \right)^2 &\leq
(\log k)^r \int_{t_1 + \cdots+t_r \leq x} g(t_1)^2 \cdots g(t_r)^2 \d t_1 \cdots \d t_r,
    \end{align*}
    so \eqref{eqn:intermediateintegralofg} holds for all $x \ge 0$. Therefore
    \begin{align*}
        J_k^{(r)}(F_k)&=\idotsint \limits_{\sum_{i=1}^{k-r} t_i \leq 1} \left( \prod_{i=1}^{k-r} g(kt_i)^2 \right)\\
        &\quad \times \left( \int_0^{1-\sum_{i=1}^{k-r} t_i} g(kt_{k-r+1}) \cdots \int_0^{1-\sum_{i=1}^{k-1} t_i} g(kt_k) \d t_k \cdots \d t_{k-r+1}  \right)^2 \d t_1 \cdots \d t_{k-r}\\
    &\leq \left( \frac{\log k}{k} \right)^r \idotsint \limits_{\sum_{i=1}^{k-r} t_i \leq 1} \left( \prod_{i=1}^{k-r} g(kt_i)^2 \right)\\
    &\quad \times \left( \int_0^{1-\sum_{i=1}^{k-r} t_i} g(kt_{k-r+1})^2 \cdots \int_0^{1-\sum_{i=1}^{k-1} t_i} g(kt_k)^2 \d t_k \cdots \d t_{k-r+1}  \right) \d t_1 \cdots \d t_{k-r}\\
    &= \left( \frac{\log k}{k} \right)^r I_k(F_k).
    \end{align*}
    By the Stone-Weierstrass Theorem, we take $F(t_1,\ldots,t_k)$ to be a smooth approximation to $F_k(\rho \delta t_1,\ldots,\rho \delta t_k)$ such that
    \begin{align*}
        I_k(F)&=(\delta \rho)^k (1+O((\log k)^{-1/2})) I_k(F_k)\\
        J_k^{(r)}(F)&=(\delta \rho)^{k+r} (1+O((\log k)^{-1/2})) J_k^{(r)}(F_k)
    \end{align*}
    for all $r \in \mathbb{Z}^+$, and we are done.
\end{proof}
\begin{lem}\label{lem:maynardtaosieve}
    Let $q \ge 2$ be squarefree and $N$ sufficiently large in terms of $q$. Suppose $\{h_1,\ldots,h_k\} \subseteq [0,N]$ is an admissible $k$-tuple such that for all $1 \leq i<j \leq k$, we have $\gcd(h_i,q)=1$ and
    \begin{align*}
        p \mid h_i-h_j &\implies p \leq \e \log N.
    \end{align*}
    Let $b \in \mathbb{Z}$ such that for all $j \in \{1,\ldots,k\}$, we have $\gcd(qb+h_j,W)=1$. Then the following are true.
    \begin{enumerate}
        \item We have
        $$\sum_{\substack{N<n \leq 2N\\ n \equiv b \Mod{W}}} w_n = (1+o(1)) \frac{N}{W} B^{-k} I_k(F).$$
        \item For each $j \in \{1,\ldots,k\}$, we have
        $$\sum_{\substack{N<n \leq 2N\\ n \equiv b \Mod{W}}} \bm{1}_{\mathbb{P}}(qn+h_j) w_n = (1+o(1)) \frac{N}{W} B^{-k} J_k^{(1)}(F).$$
        \item For $r \in \{1,2,\ldots,k\}$ and $j_1,\ldots,j_r \in \{1,\ldots,k\}$ strictly increasing, we have
        $$\sum_{\substack{N<n \leq 2N\\ n \equiv b \Mod{W}}} \bm{1}_{\mathbb{P}} (qn+h_{j_1}) \cdots \bm{1}_{\mathbb{P}}(qn+h_{j_r}) w_n \leq (4^{r-1} (r-1)^{r-1}+O(\delta)) \frac{N}{W} B^{-k} J_k^{(r)}(F).$$
    \end{enumerate}
\end{lem}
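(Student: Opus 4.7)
The plan is to follow the template of the second-moment argument in \citet{banks2016limit}, extending it to an $r$-th moment. In each of the three estimates I expand $w_n$ as a double sum over $d_i, d_i'$ with $d_i, d_i' \mid qn+h_i$, interchange summation, evaluate or bound the inner count of $n$, and apply Lemma \ref{mobiusinvlem} to convert the resulting $(d_i, d_i')$-sum into the stated integral factor. For part (1), the divisibility conditions combine to $[d_i,d_i'] \mid qn+h_i$; the smoothness hypothesis on $h_i - h_j$ together with the support condition on $\lambda_{d_1,\ldots,d_k}$ forces the $[d_i,d_i']$ to be pairwise coprime and coprime to $qWZ_{N^{4\e}}$ once $N$ is large in terms of $q$. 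By CRT and $n \equiv b \pmod W$, the inner count of $n \in (N,2N]$ is $N/(W \prod_i [d_i,d_i']) + O(1)$, with the $O(1)$ error contributing $\ll N^{\delta+o(1)}$ thanks to $\sum_i \log d_i / \log N \le \delta$, and the main term falls out of Lemma \ref{mobiusinvlem}.

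For part (2), the same setup applies, except that $\bm{1}_{\mathbb P}(qn+h_j)=1$ forces $[d_j,d_j']=1$ (any divisor of size $\le N^{\delta}$ of the integer $qn+h_j \gg N^{1-\delta}$ would preclude its primality for $N$ large). The inner count becomes $\pi(2N;\cdot,\cdot) - \pi(N;\cdot,\cdot)$ on a single residue class modulo $qW \prod_{i\ne j}[d_i,d_i']$. I would apply Theorem \ref{modifiedbombierivinogradov} with $q_0 = qW$ and level $\theta$ exceeding $\delta$ to bound the error uniformly in the auxiliary divisors, then invoke Lemma \ref{mobiusinvlem} with $[d_j,d_j']$ replaced by $\varphi([d_j,d_j'])$. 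The derivative identity built into that lemma (the integrand $F_j' G_j'$) is precisely what produces the $J_k^{(1)}(F)$ factor.

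For part (3), which is the new ingredient, the $r$ primality constraints analogously force $[d_{j_\ell},d_{j_\ell}']=1$ for all $\ell$. Instead of seeking an asymptotic (which would require an $r$-tuple prime conjecture), I only need an upper bound. The inner count of $n$ is then the number of $n$ in an arithmetic progression modulo $qW \prod_{i\notin\{j_\ell\}}[d_i,d_i']$ such that the $r$ forms $qn+h_{j_\ell}$ are simultaneously prime. The strategy is to handle one of the primality conditions via Theorem \ref{modifiedbombierivinogradov} as in part (2), reducing to counting $n$ in a shorter progression with $r-1$ additional linear forms prime, and then apply a standard multidimensional Selberg upper-bound sieve (Halberstam--Richert style) to those $r-1$ forms. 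The $(r-1)$-dimensional Selberg constant contributes $(2(r-1))^{r-1}$, and the Brun--Titchmarsh loss on the remaining prime contributes an extra factor $2^{r-1}$, giving the advertised constant $4^{r-1}(r-1)^{r-1}+O(\delta)$. Summing the resulting expression over $d_i, d_i'$ via Lemma \ref{mobiusinvlem} converts the $r$-fold prime integration into integration of $F$ over the last $r$ variables $t_{k-r+1},\ldots,t_k$, producing exactly $J_k^{(r)}(F)$.

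The main obstacle is part (3): producing a Selberg upper bound that is uniform in the auxiliary divisor variables $d_i, d_i'$, that decouples cleanly from the outer Maynard--Tao weights so Lemma \ref{mobiusinvlem} can be applied verbatim, and that yields the constant $4^{r-1}(r-1)^{r-1}$ with error $O(\delta)$ rather than something that blows up in $r$. Uniformity in the $d_i, d_i'$ needs careful tracking of level-of-distribution losses from the support-$\delta$ restriction on $F_{\ell,j}$ (so that the combined modulus $qW \prod_{i \notin \{j_\ell\}}[d_i,d_i']$ stays below the Bombieri--Vinogradov and Selberg sieve levels), while keeping the singular-series local factors matched with the $(\varphi(W)/W)^r$ from $B^{-k}$ requires identifying them with the arithmetic factors absorbed into $J_k^{(r)}(F)$ via the $\varphi$-version of Lemma \ref{mobiusinvlem}.
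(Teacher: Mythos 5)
Your parts (1) and (2) match the paper, which treats them as essentially the same computation as in Banks--Freiberg--Maynard. The interesting divergence, and the place where your own last paragraph flags a real difficulty without resolving it, is part (3).

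You propose to expand $w_n$ first, kill $d_{j_\ell}=d_{j_\ell}'=1$, reduce to an inner count of $n$ in a progression mod $qW\prod_{i\notin\{j_\ell\}}[d_i,d_i']$ with all $r$ forms prime, and then hit that count with one application of Bombieri--Vinogradov plus a multidimensional Selberg upper-bound sieve. This runs into exactly the decoupling and uniformity issue you name: the Selberg bound on the inner count would carry local factors and a truncation level that depend on the $d_i,d_i'$, so the outer $(d_i,d_i')$-sum no longer feeds into Lemma~\ref{mobiusinvlem} as a clean plug-in, and the singular-series bookkeeping against $B^{-k}$ is no longer automatic. The paper avoids this entirely by inserting the Selberg majorants \emph{pointwise, before expanding $w_n$}: for $i=1,\dots,r-1$ it uses
\begin{equation*}
\bm{1}_{\mathbb P}(qn+h_{j_i}) \le \Bigl(\sum_{e_i\mid qn+h_{j_i}}\mu(e_i)\,G_i\bigl(\tfrac{\log e_i}{\log N}\bigr)\Bigr)^{2},
\end{equation*}
with $G_i$ supported on $[0,\tfrac{1}{4(r-1)}-\tfrac{2\delta}{r-1}]$ and $G_i(0)=1$. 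The whole quantity then becomes a part-(2)-type sum with the augmented square weight $\bigl(\sum\lambda\cdot\prod_i\sum_{e_i}\mu(e_i)G_i\bigr)^{2}$ and a \emph{single} surviving indicator $\bm{1}_{\mathbb P}(qn+h_{j_r})$, which is evaluated asymptotically through Theorem~\ref{modifiedbombierivinogradov} exactly as in (2). Because the $e_i$-supports are fixed in advance, Lemma~\ref{mobiusinvlem} applies uniformly to the enlarged set of variables and yields the factorisation $J_k^{(r)}(F)\cdot\prod_{i=1}^{r-1}\int_0^\infty G_i'(t)^2\,\d t$.

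This also corrects the accounting of the constant. Taking each $G_i$ linear on its support gives $\int_0^\infty G_i'(t)^2\,\d t\le 4(r-1)+O(\delta)$, hence $(4(r-1))^{r-1}+O(\delta)$ in total. The $\tfrac{1}{4(r-1)}$ is forced by the level constraint $\prod_i[d_i,d_i']\prod_i[e_i,e_i']\le N^{1/2-\delta}$: one factor of $2$ because the majorant is a square (so each $G_i$ gets half the room its divisor variable suggests), one factor of $2$ because the Bombieri--Vinogradov exponent is $\tfrac12$, and a factor $r-1$ from sharing the remaining room among the $r-1$ sieves. Your split into $(2(r-1))^{r-1}$ ``Selberg constant'' times $2^{r-1}$ ``Brun--Titchmarsh loss'' happens to multiply to the same number, but it is not how the constant arises here: no Brun--Titchmarsh step occurs, the last prime is counted asymptotically via Bombieri--Vinogradov with no loss, and the entire $4^{r-1}(r-1)^{r-1}$ is produced by the $r-1$ integrals $\int G_i'^2$.
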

\begin{proof}
    Parts (1) and (2) is nearly identical to \cite[Lemma 4.6]{banks2016limit}, and any differences can be found in the proof of (3), so we only prove (3). We will use the sieve upper bound
    $$\bm{1}_{\mathbb{P}} (qn+h_{j_i}) \leq \left( \sum_{e_i \mid qn+h_{j_i}} \mu(e_i) G_i \left( \frac{\log e_i}{\log N} \right) \right)^2$$
    for smooth decreasing functions $G_i:[0,\infty) \to \mathbb{R}$ supported on $[0,\frac{1}{4(r-1)}-\frac{2 \delta}{r-1}]$ with $G(0)=1$, for each $i=1,2,\ldots,r-1$. Observe that there is no contribution from any summand in the definition of $w_n$
unless $d_{j_1}=\cdots=d_{j_r}=1$, since
$1_{\mathbb P}(qn+h_{j_i})$ together with the divisibility condition
$d_{j_i}\mid(qn+h_{j_i})$ forces $d_{j_i}=1$ for each
$i=1,\ldots,r$. Thus, we have
    \small
    \begin{align*}
        &\sum_{\substack{N<n \leq 2N\\ n \equiv b \Mod{W}}} \prod_{i=1}^r \bm{1}_{\mathbb{P}}(qn+h_{j_i}) \left( \sum_{\substack{d_1,\ldots, d_k\\ d_i \mid qn+h_{j_i} \forall i}} \lambda_{d_1 \cdots d_k} \right)^2\\
        &\leq \sum_{\substack{N<n \leq 2N\\ n \equiv b \Mod{W}}} \bm{1}_{\mathbb{P}}(qn+h_{j_r}) \prod_{i=1}^{r-1} \left( \sum_{e_i \mid qn+h_{j_i}} \mu(e_i) G_i \left( \frac{\log e_i}{\log N} \right) \right)^2 \Bigg( \sum_{\substack{d_1,\ldots, d_k\\ d_i \mid qn+h_{j_i} \forall i\\ d_{j_1} = \cdots = d_{j_r}=1}} \lambda_{d_1 \cdots d_k} \Bigg)^2\\
        &= \sum_{\substack{N<n \leq 2N\\ n \equiv b \Mod{W}}} \bm{1}_{\mathbb{P}}(qn+h_{j_r}) \Bigg( \sum_{\substack{d_1,\ldots,d_k\\e_1,\ldots,e_{r-1}\\ d_i \mid qn+h_{j_i} \forall i\\ d_{j_1} = \cdots = d_{j_r}=1\\ e_\ell \mid qn+h_{j_\ell} \forall \ell}} \lambda_{d_1,\ldots,d_k} \mu(e_1) \cdots \mu(e_{r-1}) G_1 \left( \frac{\log e_1}{\log N} \right) \cdots G_{r-1} \left( \frac{\log e_{r-1}}{\log N} \right) \Bigg)^2.
    \end{align*}
    \normalsize
    Expanding the square,
    \small
    \begin{align*}
        &= \sum_{\substack{N<n \leq 2N\\ n \equiv b \Mod{W}}} \bm{1}_{\mathbb{P}}(qn+h_{j_r}) \sum_{\substack{d_1,\ldots,d_k\\e_1,\ldots,e_{r-1}\\ d_i \mid qn+h_{j_i} \forall i\\ d_{j_1}=\cdots=d_{j_r}=1\\ e_\ell \mid qn+h_{j_\ell} \forall \ell}} \sum_{\substack{d_1',\ldots,d_k'\\e_1',\ldots,e_{r-1}'\\ d_i' \mid qn+h_{j_i} \forall i\\ d_{j_1}'=\cdots=d_{j_r}'=1\\ e_\ell' \mid qn+h_{j_\ell} \forall \ell}} \lambda_{d_1,\ldots,d_k} \lambda_{d_1',\ldots,d_k'}\\
        &\hspace{8.5cm} \times \prod_{i=1}^{r-1} \mu(e_i)\mu(e_i')G_i \left( \frac{\log e_i}{\log N} \right) G_i \left( \frac{\log e_i'}{\log N} \right)\\
        &= \sum_{\substack{d_1,\ldots,d_k\\e_1,\ldots,e_{r-1}\\d_{j_1}=\cdots=d_{j_r}=1\\\gcd(d_i,q)=1 \forall i\\ \gcd(e_\ell,q)=1 \forall \ell}} \sum_{\substack{d_1',\ldots,d_k'\\e_1',\ldots,e_{r-1}'\\ d_{j_1}'=\cdots=d_{j_r}'=1\\\gcd(d_i',q)=1 \forall i\\ \gcd(e_\ell',q)=1 \forall \ell}} \lambda_{d_1,\ldots,d_k} \lambda_{d_1',\ldots,d_k'} \prod_{i=1}^{r-1} \mu(e_i)\mu(e_i')G_i \left( \frac{\log e_i}{\log N} \right) G_i \left( \frac{\log e_i'}{\log N} \right)\\
        &\hspace{4.5cm} \times \sum_{\substack{N<n \leq 2N\\ n \equiv b \mod{W}\\ n \equiv -q^{-1} h_{j_i} \mod{[d_i,d_i']} \forall i\\ n \equiv -q^{-1} h_{j_\ell} \mod{[e_\ell,e_\ell']} \forall \ell}} \bm{1}_{\mathbb{P}}(qn+h_{j_r}),
    \end{align*}
    \normalsize
    since we assumed $\gcd(q,h_{j_i})=1$ for all $ 1\leq i \leq r-1$. The innermost sum is
    $$\frac{\pi(2qN+h_{j_r})-\pi(qN+h_{j_r})}{\varphi(qW) \prod_{i=1}^k \varphi([d_i,d_i']) \prod_{i=1}^{r-1} \varphi([e_i,e_i'])}+O \left(E \left(qN;qW\prod_{i=1}^k [d_i,d_i'] \prod_{i=1}^{r-1} [e_i,e_i']\right) \right),$$
    where
    $$E(qN;q')=\max_{\substack{(a,q')=1\\ h \in \mathcal{H}}} \left| \pi(2qN+h;q',a)-\pi(qN+h;q',a)-\frac{\pi(2qN+h)-\pi(qN+h)}{\varphi(q')} \right|,$$
    because by the support of $\lambda_{d_1,\ldots,d_k}$ and the choice of $b$ we have $[d_i,d_i'],[e_\ell,e_\ell']$ are all pairwise coprime, and by assumption $q \mid W$. We first deal with the error term. In the same way as \cite[Lemma 4.6(iii)]{banks2016limit}, we can restrict to arithmetic progressions mod $sW$, where $$s=\prod_{i=1}^k [d_i,d_i'] \prod_{i=1}^{r-1} [e_i,e_i'] \leq N^{1/2-\delta}.$$
    Using the bound $\lambda_{d_1,\ldots,d_k} \ll 1$ and the trivial bound $E(qN;q') \ll 1+qN/\varphi(q')$, using Cauchy--Schwarz and Theorem \ref{modifiedbombierivinogradov}, the error term contributes
    \begin{align*}
        &\sideset{}{'}\sum_{\substack{d_1,\ldots,d_k\\d_1',\ldots,d_k'\\d_{j_1}=\cdots=d_{j_r}=1\\d_{j_1}' = \cdots = d_{j_r}'=1\\e_1,\ldots,e_{r-1}\\ e_1',\ldots,e_{r-1}'}}  |\lambda_{d_1,\ldots,d_k}\lambda_{d_1',\ldots,d_k'}| E \left(qN;qW\prod_{i=1}^k [d_i,d_i'] \prod_{i=1}^{r-1} [e_i,e_i']\right)\\
        &\ll \sum_{\substack{s \leq N^{\frac12- \delta}\\ \gcd(s,WZ_{N^{4\e}})=1}} \mu(s)^2 \tau_{6k}(s) E \left(qN;sqW\right)\\
        &\ll \Bigg( \sum_{\substack{s \leq N^{\frac12- \delta}\\ \gcd(s,WZ_{N^{4 \e}})=1}} \mu(s)^2 \tau_{6k}(s)^2 \left( 1+\frac{qN}{\varphi(sqW)} \right) \Bigg)^{1/2} \Bigg( \sum_{\substack{s \leq N^{\frac12- \delta}\\ \gcd(s,WZ_{N^{4 \e}})=1}} \mu(s)^2 E \left(qN;sqW\right) \Bigg)^{1/2}\\
        &\ll  \frac{N}{W(\log N)^{2k}},
    \end{align*}
    where $\sum'$ denotes the additional pairwise coprimality condition between $[d_i,d_i'],[e_\ell,e_\ell'],qWZ_{N^{2\e}}$ and $\tau_{6k}(s)$ denotes the number of ordered $6k$-tuples of positive integers whose product is $s$. The main term is treated the same as \cite[Lemma 4.6(ii)]{banks2016limit}. Expanding $\lambda_{d_1,\ldots,d_k}$, the main term is
    \begin{align*}
        (1+o(1))\frac{qN}{\log N} \sum_{j=1}^J \sideset{}{'}\sum_{\substack{d_1,\ldots,d_k\\ d_1',\ldots,d_k'\\ d_{j_1} = \cdots = d_{j_r}=1\\d_{j_1}' = \cdots = d_{j_r}'=1}} &\prod_{j=1}^k \mu(d_j) \prod_{\substack{1 \leq \ell \leq k\\ \ell \neq j_1,\ldots,j_{r-1}}} F_{\ell,j}\left( \frac{\log d_\ell}{\log N} \right) \prod_{i=1}^{r-1} F_{j_i,j}(0)\, G_i \left( \frac{\log d_i}{\log N} \right)\\
        &\prod_{j=1}^k \mu(d_j') \prod_{\substack{1 \leq \ell \leq k\\ \ell \neq j_1,\ldots,j_{r-1}}} F_{\ell,j}\left( \frac{\log d_\ell'}{\log N} \right) \prod_{i=1}^{r-1} F_{j_i,j}(0)\, G_i \left( \frac{\log d_i'}{\log N} \right)\\
        &\times \varphi(qW)^{-1} \varphi([d_1,d_1'])^{-1} \cdots \varphi([d_k,d_k'])^{-1},
    \end{align*}
    where $\sum'$ denotes the additional restriction of $[d_1,d_1'],\ldots,[d_k,d_k'],qWZ_{N^{4\e}}$ being pairwise coprime. Let
    \begin{align*}
        \widetilde{F}(t_1,\ldots,t_k)=&G_1'(t_{j_1}) \cdots G_{r-1}'(t_{j_{r-1}}) \\
        &\times \int_0^\infty \cdots \int_0^\infty F(t_1,\ldots,t_{j_1-1},u_{j_1},\ldots,u_{j_{r-1}},t_{j_r+1},\ldots,t_k) \d u_{j_1} \cdots \d u_{j_{r-1}}.
    \end{align*}
    Note $\widetilde{F}$ is supported on $t_1,\ldots,t_k$ with $\sum_{i=1}^k t_i \leq 1/4-\delta$. Using Lemma \ref{mobiusinvlem}, the main term is
    \begin{align*}
        &(1+o(1)) \frac{qN}{\varphi(qW) \log N} B^{-k+1} \sum_{j=1}^J \prod_{\substack{1 \leq \ell \leq k\\ \ell \neq j_1,\ldots,j_r}} \int_0^\infty F'_{\ell,j}(t_\ell)^2 \d t_\ell \prod_{i=1}^r F_{j_i,j}(0)^2 \prod_{i=1}^{r-1} \int_0^\infty G_i'(t_i)^2 \d t_i\\
        &\leq (1+o(1)) \frac{N}{W} B^{-k} \int_0^\infty \cdots \int_0^\infty \left( \int_0^\infty \widetilde{F}(t_1,\ldots,t_k) \d t_{j_r} \right)^2 \d t_1 \cdots \d t_{j_r-1} \d t_{j_r+1} \cdots \d t_k.
    \end{align*}
    Combined with the above error term bound we have
    \begin{align*}
        &\sum_{\substack{N<n \leq 2N\\ n \equiv b \Mod{W}}} \prod_{i=1}^r \bm{1}_{\mathbb{P}}(qn+h_{j_i}) \Bigg( \sum_{\substack{d_1,\ldots,d_k\\ d_i \mid qn+h_{j_i} \forall i\\ d_{j_1}=\cdots=d_{j_r}=1}} \lambda_{d_1,\ldots,d_k} \Bigg)^2\\
        &\leq (1+o(1)) \frac{N}{W} B^{-k} \int_0^\infty \cdots \int_0^\infty \left( \int_0^\infty \widetilde{F}(t_1,\ldots,t_k) \d t_{j_r} \right)^2 \d t_1 \cdots \d t_{j_r-1} \d t_{j_r+1} \cdots \d t_k\\
        &=(1+o(1)) \frac{N}{W}B^{-k} J_k^{(r)}(F) \int_0^\infty \cdots \int_0^\infty \prod_{i=1}^{r-1} G_i'(t_{j_i})^2 \d t_{j_1} \cdots \d t_{j_{r-1}}.
    \end{align*}
    Taking $G_i(t)$ to be a fixed smooth approximation to $1-t/(\frac{1}{4(r-1)}-\frac{2 \delta}{r-1})$ with $G(0)=1$ and $\int_0^\infty G_i'(t)^2 \d t \leq 4(r-1)+O(\delta)$, we are done.
\end{proof}
It will be helpful to make the following definitions.
\setcounter{section}{1}
\setcounter{thm}{3}
\begin{defn}
    For a squarefree{} integer $q \ge 2$, $m \in \mathbb{Z}^+$, and $A \subseteq \prod_{i=1}^m (\mathbb{Z}/q\mathbb{Z})^\times$, define
    \[
    \pi(x;q,A)=\#\{p_n \leq x:(p_n \mod{q},\ldots,p_{n+m-1} \mod{q}) \in A\}.
    \]
    Also, for $r \in \mathbb{Z}^+$ with $r>1$, define 
    \[
    M{}=\left\lceil \left(\frac{2^{3r-2} (r-1)^{2r-1}}{r!}\right)^{\frac{1}{r-1}} m(m(r-1)+r)^{\frac{1}{r-1}}  \right\rceil,
    \]
    and define the set of functions
    \small
    $$J_r(m,M)=\{j:\{1,\ldots,m\} \to \{1,\ldots,M\}:j(i+1) \geq j(i), \text{ no consecutive $r$ values }j(i) \text{ are equal}\}.$$
    \normalsize
\end{defn}
\setcounter{section}{5}
\setcounter{thm}{5}
\begin{prop} \label{maynardtaosieve}
    Let $m,r \in \mathbb{Z}^+$ with $r>1$, $q \ge 2$ be a squarefree{} integer and recall $M$ from Definition \ref{defn:notationforthms}.
    Let $k$ be a sufficiently large multiple of $M{}$ in terms of $m$ and $r$. Let $\e>0$ be sufficiently small. Define $$W := \prod_{\substack{p\leq \e \log N\\ p \nmid Z_{N^{4 \e}}}}p.$$ Let $\mathcal{H}= \{h_1,\ldots,h_k\} \subseteq [0,N]$ be an admissible $k$-tuple such that for all $1 \leq i<j \leq k$, $\gcd(h_i,q)=1$ and $$p \mid h_i-h_j \implies p \leq \e \log N.$$ Let $b \in \mathbb{Z}$ such that
    $$\left( \prod_{j=1}^k (qb+h_j),W \right)=1.$$
    Let $\mathcal{H}= \mathcal{H}_1 \cup \cdots \cup \mathcal{H}_{M{}}$ be a partition of $\mathcal{H}$ into $M{}$ sets of equal size. Then for all sufficiently large $N$ in terms of $m,q,k,\e$, there is $n \in [N,2N]$ with $n \equiv b \Mod{W}$ and some set of distinct indices $\{i_1,\ldots,i_{m+1}\}$ such that
    \begin{align*}
        1 \leq |\mathcal{H}_{i}(n) \cap \mathbb{P}| \leq r-1 &\quad \text{ for all } i \in \{i_1,\ldots,i_{m+1}\},\\
        |\mathcal{H}_i(n) \cap \mathbb{P}| =0 &\quad \text{ for all } i_1<i<i_{m+1} \text{ such that } i \neq i_1,\ldots,i_{m+1}.
    \end{align*}
\end{prop}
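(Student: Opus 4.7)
The plan is to show, by averaging against sieve weights, that some $n$ in the right residue class makes the test statistic
$$f(n) := \sum_{i=1}^k \bm{1}_{\mathbb{P}}(qn+h_i) - m(r-1) - (m(r-1)+r) \sum_{j=1}^M \sum_{\substack{h_{j_1},\ldots,h_{j_r} \in \mathcal{H}_j \\ \text{pairwise distinct}}} \prod_{\ell=1}^r \bm{1}_{\mathbb{P}}(qn+h_{j_\ell})$$
strictly positive; a combinatorial argument will then convert $f(n)>0$ into the desired block structure. Concretely, I will study $S := \sum_{N<n\leq 2N,\, n \equiv b \Mod W} f(n) w_n$ and show $S>0$ for suitable parameters.

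Step 1 is to apply Lemma \ref{lem:maynardtaosieve} termwise. Parts (1) and (2) contribute $(1+o(1))k \cdot \frac{N}{W} B^{-k} J_k^{(1)}(F)$ and $-m(r-1)(1+o(1))\frac{N}{W} B^{-k} I_k(F)$. For the $r$-tuple term, the inner sum over pairwise distinct tuples equals $r!$ times the sum over strictly increasing $r$-tuples; since each of the $M$ blocks contains at most $(k/M)^r$ ordered distinct $r$-tuples, part (3) gives the upper bound
$$(m(r-1)+r)(4^{r-1}(r-1)^{r-1} + O(\delta)) \frac{k^r}{M^{r-1}} \cdot \frac{N}{W} B^{-k} J_k^{(r)}(F).$$
Step 2 is to invoke Lemma \ref{integralestimate}: writing $L := \rho \delta \log k$ for some $\rho \in (0,1)$, I obtain $J_k^{(1)}/I_k \geq (1+o(1))L/k$ and $J_k^{(r)}/I_k \leq (1+o(1))(L/k)^r$. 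Assembling everything yields
$$S \geq (1+o(1))\frac{N}{W} B^{-k} I_k(F) \bigl(L - m(r-1) - \gamma L^r \bigr), \quad \gamma := \frac{(m(r-1)+r)(4^{r-1}(r-1)^{r-1} + O(\delta))}{M^{r-1}}.$$
Step 3 is to optimise in $L$. The bracket attains its maximum at $L^\ast = (r\gamma)^{-1/(r-1)}$ with value $\tfrac{r-1}{r}L^\ast - m(r-1)$; the stated $M$ is chosen so that this maximum is positive for $\delta$ small, and taking $k$ large with $\rho$ tuned so that $L\approx L^\ast$ delivers $S>0$.

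With $S>0$ in hand, pick such $n$ and let $t_j := |\mathcal{H}_j(n) \cap \mathbb{P}|$. Classify each index $j$ as \emph{good} ($1 \leq t_j \leq r-1$), \emph{bad} ($t_j \geq r$), or \emph{empty} ($t_j = 0$), with counts $g$ and $\beta$. The inequality $(t_j-1)(t_j-2)\cdots(t_j-r+1) \geq (r-1)!$ for $t_j \geq r$ gives $t_j \leq t_j(t_j-1)\cdots(t_j-r+1)/(r-1)!$ on bad indices, so $\sum_j t_j \leq (r-1)g + T/(r-1)!$ where $T := \sum_j t_j(t_j-1)\cdots(t_j-r+1) \geq r!\,\beta$. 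Combining with $f(n)>0$,
$$(r-1)g > m(r-1) + \bigl((m(r-1)+r)r! - r\bigr)\beta,$$
and checking constants and coefficients in $\beta$ confirms $g \geq (m+1)(\beta+1)$. The $\beta$ bad indices split $\{1,\ldots,M\}$ into $\beta+1$ maximal non-bad runs; pigeonhole produces one with $\geq m+1$ good indices. Choosing $i_1 < \cdots < i_{m+1}$ as the first $m+1$ good indices in that run, every index strictly between $i_1$ and $i_{m+1}$ lies in the same non-bad run and cannot be good (else it would appear among the $i_j$), so must be empty.

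I expect the main obstacle to be Step 3: verifying that the stated form of $M$ really matches the constants produced by Lemma \ref{lem:maynardtaosieve}(3) and Lemma \ref{integralestimate}, so that some attainable $L$ keeps the bracket above zero.
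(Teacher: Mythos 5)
Your overall strategy is exactly the paper's: study $S = \sum_{N < n \leq 2N,\, n\equiv b} f(n) w_n$ with the same test statistic $f(n)$, apply Lemma~\ref{lem:maynardtaosieve} termwise and Lemma~\ref{integralestimate} with $L = \rho\delta\log k$, show $S>0$, and then convert $f(n)>0$ into the block structure by classifying indices and applying pigeonhole. Your final combinatorial step is in fact slightly sharper than the paper's (you get $g \geq (m+1)(\beta+1)$, the paper only derives $g \geq m(\beta+1)+1$), and it is correct: with $T := \sum_j t_j(t_j-1)\cdots(t_j-r+1)$ one indeed has $\sum_j t_j \leq (r-1)g + T/(r-1)!$ and $T \geq r!\,\beta$, and the coefficient check $(m(r-1)+r)r! \geq m(r-1)+2r-1$ holds for $r\geq 2$.

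The obstacle you flagged in Step~3 is a genuine gap, and it traces back to Step~1. You read the inner sum over ``pairwise distinct'' $r$-tuples as a sum over \emph{ordered} $r$-tuples, so each block of size $k/M$ contributes at most $(k/M)(k/M-1)\cdots(k/M-r+1) \leq (k/M)^r$ of them and the total multiplier on $J_k^{(r)}$ is $\approx k^r/M^{r-1}$. The paper's proof treats the same sum as a sum over \emph{unordered} $r$-subsets (equivalently $j_1 < \cdots < j_r$), giving the count $M\binom{k/M}{r} \leq k^r/(r!\,M^{r-1})$, which is a factor $r!$ smaller. That $r!$ is precisely the one in the denominator of the definition of $M$, and it is not slack: with your ordered count, $\gamma$ is $r!$ times the paper's, so $L^\ast = (r\gamma)^{-1/(r-1)}$ shrinks by the factor $(r!)^{1/(r-1)}$, and the maximum $\tfrac{r-1}{r}L^\ast - m(r-1)$, which with the paper's $\gamma$ and the stated $M$ is only marginally nonnegative, becomes strictly negative. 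So as written you do not obtain $S>0$. The repair is simple: read the $r$-tuple sum as unordered (note the paper's in-box contribution bound for a bad box with $t_j = r$ is exactly $r - (m(r-1)+r)\cdot 1 = -m(r-1)$, consistent with one $r$-subset, not $r!$ of them), replace your $(k/M)^r$ per block by $\binom{k/M}{r}$, and then your optimisation closes with the stated $M$; alternatively, keeping the ordered reading would require enlarging $M$ by $(r!)^{1/(r-1)}$, which proves something weaker than the proposition.
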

\begin{proof}
     The proof is similar to that of \cite[Theorem 4.3]{banks2016limit}. Consider
     \small
     $$S=\sum_{N<n \leq 2N} \Bigg( \sum_{i=1}^k \bm{1}_{\mathbb{P}}(qn+h_i)-m(r-1)-(m(r-1)+r) \sum_{j=1}^M{} \sum_{\substack{h_{j_1},\ldots,h_{j_r} \in \mathcal{H}_j\\ j_1<\cdots<j_r}} \prod_{i=1}^r \bm{1}_{\mathbb{P}} (qn+h_{j_i}) \Bigg) w_n.$$
     \normalsize
     By Lemma \ref{lem:maynardtaosieve}, we have
     \begin{align*}
         S \geq \sum_{i=1}^k &(1+o(1)) \frac{N}{W} B^{-k} J_k^{(1)}(F)-m(r-1)(1+o(1)) \frac{N}{W} B^{-k} I_k(F)\\
         &-(m(r-1)+r) \sum_{\substack{h_{j_1},\ldots,h_{j_r} \in \mathcal{H}_j\\ j_1<\cdots<j_r}} (4^{r-1}(r-1)^{r-1}+o(\delta)) \frac{N}{W} B^{-k} J_k^{(r)}(F).
     \end{align*}
     Using Lemma \ref{integralestimate} and choosing $0<\rho<1$ such that $\rho\delta \log k=2(r-1)m$, we get
     \begin{align*}
         S &\geq \frac{N}{W} B^{-k} I_k(F) \Bigg(k \cdot \frac{2(r-1)m}{k}-m(r-1)\\
         &\hspace{3.2cm} -4^{r-1}(r-1)^{r-1}(m(r-1)+r)  M{} \binom{k/M{}}{r} \left( \frac{2(r-1)m}{k} \right)^r-O(\delta) \Bigg)\\
         &> \frac{N}{W} B^{-k} I_k(F) \left( m(r-1)-4^{r-1}(r-1)^{r-1}(m(r-1)+r)  \cdot \frac{2^r(r-1)^rm^r}{r! M{}^{r-1}} \right),
     \end{align*}
     so $S>0$ since $$M{}=\left\lceil \left(\frac{2^{3r-2} (r-1)^{2r-1}}{r!}\right)^{\frac{1}{r-1}} m(m(r-1)+r)^{\frac{1}{r-1}}  \right\rceil.$$
     Therefore, there must exist $n \in (N,2N]$ making a positive contribution to $S$. Observe that
     \begin{enumerate}
         \item every $\mathcal{H}_i$ with $|\mathcal{H}_i(n) \cap \mathbb{P}| \geq r$ contributes at most $r-m(r-1)-r=-m(r-1)$ to $S$,
         \item every $\mathcal{H}_i$ with $|\mathcal{H}_i(n) \cap \mathbb{P}| \in [1,r-1]$ contributes at most $r-1$.
     \end{enumerate}
     For each $n \in (N,2N]$, define
     \begin{align*}
         s_n &= \text{ number of indices $i$ for which $|\mathcal{H}_i(n) \cap \mathbb{P}| \geq r$},\\
         t_n &= \text{ number of indices $i$ for which $|\mathcal{H}_i(n) \cap \mathbb{P}| \in [1,r-1]$}.
     \end{align*}
     For those $n$ making a positive contribution to $S$, we must have
     $$-m(r-1)s_n-m(r-1)+t_n(r-1)>0,$$
     which implies $t_n \geq m+1+ms_n$, i.e. number of indices $j$ for which $|\mathcal{H}_j(n)\cap\mathbb{P}| \in [1,r-1]$ is at least $m+1+ms_n$. In particular, there must be some set of $m+1$ indices $i_1<\cdots<i_{m+1}$ for which $|\mathcal{H}_i(n) \cap \mathbb{P}| \in [1,r-1]$ for $i=i_1,\ldots,i_{m+1}$ and $|\mathcal{H}_i(n) \cap \mathbb{P}|=0$ for $i_1<i<i_{m+1}$ and $i\neq i_1,\ldots,i_{m+1}$.
\end{proof}
\section{A Modified Erd\H{o}s--Rankin Type Construction} \label{sec:erdosrankin}
We have the following elementary lemma.
\begin{lem} \label{erdoselementarylem}
    Let $\{h_1,\ldots,h_k\}$ be an admissible $k$-tuple, let $S \subseteq \mathbb{Z}$, and $\mathcal{P}$ be a set of primes such that for some $x \geq 2$, we have
    $$\left\{ \begin{aligned}
        &\{h_1,\ldots,h_k\} \subseteq S \subseteq [0,x^2],\\
        &|\{p \in \mathcal{P}:p>x\}|>|S|+k.
    \end{aligned} \right.$$
    Then, there is a set of integers $\{a_p:p \in \mathcal{P}\}$ such that
    $$\{h_1,\ldots,h_k\}=S \setminus \bigcup_{p \in \mathcal{P}}\{g:g \equiv a_p \Mod{p} \}.$$
\end{lem}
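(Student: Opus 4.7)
The plan is to run a greedy Erd\H{o}s--Rankin style matching. First I would enumerate $S \setminus \{h_1,\ldots,h_k\} = \{s_1,\ldots,s_L\}$, where $L = |S|-k$. My strategy is to assign to each $s_j$ a distinct prime $p_j \in \mathcal{P}$ with $p_j > x$ and set $a_{p_j} \equiv s_j \pmod{p_j}$, so that $s_j$ lies in the sieved residue class. For every leftover prime $p \in \mathcal{P}$ (whether $p \le x$ or a $p > x$ not used in the matching), I would pick $a_p$ via admissibility: since $\#\{n \bmod p : \prod_i (n + h_i) \equiv 0 \pmod p\} < p$, there exists $a_p$ with $h_i \not\equiv a_p \pmod p$ for every $i$. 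By construction no $h_i$ will be sieved by any leftover prime, so all that remains is to check that the matching can be carried out without ever forcing $h_i \equiv s_j \pmod{p_j}$.

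The constraint on each matching prime $p_j$ is precisely $p_j \nmid (s_j - h_i)$ for all $i = 1,\ldots,k$. Since $0 < |s_j - h_i| \le x^2$ and $p_j > x$, two distinct primes larger than $x$ dividing $s_j - h_i$ would force $(x+1)^2 \le |s_j - h_i| \le x^2$, which is impossible. Hence at most one prime exceeding $x$ divides each $s_j - h_i$, so the set of forbidden primes $p > x$ for a given $s_j$ has size at most $k$.

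The greedy matching then goes through: at step $j \le L$, the number of primes of $\mathcal{P}$ above $x$ not yet assigned is at least $(|S|+k+1) - (j-1) \ge 2k+2$, and at most $k$ of these are forbidden for $s_j$, leaving at least $k+2$ valid choices. Picking any one as $p_j$ completes the induction. Combining the matching with the admissibility-driven assignment for the leftover primes gives $\{a_p : p \in \mathcal{P}\}$ with no $h_i$ sieved and every $s_j$ sieved by $p_j$, which is exactly the required identity. I don't foresee any real obstacle---the argument is pure bookkeeping, and the surplus $|\{p \in \mathcal{P}: p > x\}| - |S| > k$ of large primes over elements to match comfortably absorbs the $\le k$ forbidden primes per element.
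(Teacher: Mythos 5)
Your proof is correct and is essentially the same argument as Banks--Freiberg--Maynard [Lemma 5.1], which the paper cites without reproducing: match each $s \in S\setminus\{h_1,\dots,h_k\}$ to a distinct prime $p>x$ of $\mathcal{P}$, chosen to avoid the at most $k$ primes above $x$ dividing $\prod_{i}(s-h_i)$, and use admissibility to choose $a_p$ for all remaining primes of $\mathcal{P}$. The only cosmetic slip is the intermediate bound $(x+1)^2 \le |s_j-h_i|$, which presumes $x$ is an integer; it suffices to observe directly that two distinct primes $p_1,p_2>x$ satisfy $p_1p_2>x^2 \geq |s_j-h_i|>0$, so they cannot both divide $s_j-h_i$.
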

\begin{proof}
    This is \cite[Lemma 5.1]{banks2016limit}.
\end{proof}
As in \citet{banks2016limit}, we need Mertens' 3rd Theorem: for $x \geq 2$,
\begin{align}
    \prod_{p \leq x} \left( 1-\frac{1}{p} \right)&=\frac{e^{-\gamma}}{\log x} \left( 1+O \left( \frac{1}{\log x} \right) \right), \label{merten3}
\end{align}
where $\gamma=0.5772 \ldots$ is the Euler-Mascheroni constant. Also, from \cite[Chapter 20 (13)]{davenport2013multiplicative}, for any positive constant $c$, there is a positive constant $c'$ depending only on $c$ such that
\begin{align}
    \sum_{\substack{x<p \leq x+y\\ p \equiv a \Mod{q}}} \log p=\frac{y}{\varphi(q)}+O\left(x \exp \left(-c'\sqrt{\log x} \right) \right) \label{paigeapprime}
\end{align}
uniformly for $2 \leq y \leq x,q \leq \exp(c\sqrt{\log x})$ and $\gcd(q,a)=1$, except possibly when $q$ is a multiple of some $q_1$ depending on $x$ which satisfies $P^+(q_1) \gg_c \log_2 x$.
\begin{lem} \label{lemwithmod}
    Fix $k \in \mathbb{N}$, squarefree{} integer $q \ge 2$ and integers $0 < r_1 \leq \cdots \leq r_k$ all coprime to $q$. There is a number $y'=y'(q,\mathbf{r},k)$ depending only on $q,r_1,\ldots,r_k$ and $k$ such that the following holds. Let $x,y,z \in \mathbb{R}$ satisfy $x \geq 1, y\geq y'$ and
    $$2y(1+(1+r_k)x) \leq \frac{2q}{\varphi(q)}z \leq y (\log_2 y)(\log_3 y)^{-1}.$$
    Let $\mathcal{Z}$ be any (possibly empty) set of primes such that for all $p' \in \mathcal{Z}$,
\begin{align}
    \sum_{\substack{p \in \mathcal{Z}\\ p \geq p'}} \frac{1}{p} \ll \frac{1}{p'} \ll \frac{1}{\log z}. \label{reciprocalprimecondition}
\end{align}
There is a set $\{\widetilde{a}_p:p \leq y, p \notin \mathcal{Z}\}$ and an admissible $k$-tuple $\{h_1,\ldots,h_k\} \subseteq (y,z]$ such that
\begin{align*}
    \{h_1,\ldots,h_k\}&=\left((0,z] \cap \mathbb{Z} \right) \setminus \bigcup_{\substack{p \leq y\\ p \notin \mathcal{Z}}} \{g: g \equiv \widetilde{a}_p \Mod{p}\}
\end{align*}
and $p \mid \widetilde{a}_p$ whenever $p \mid q$.
Moreover, for $1 \leq i<j \leq k$,
$$p \mid h_i-h_j \implies p\leq y,$$
and for $1 \leq i\leq k$, $h_i \equiv r_i \Mod{q}$.
\end{lem}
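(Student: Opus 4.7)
I plan to prove this by a modified Erd\H{o}s-Rankin construction, adapting the strategy of \citet{banks2016limit} (compare their Lemma~5.2) but with two additional constraints: forcing $\widetilde{a}_p \equiv 0 \pmod p$ whenever $p\mid q$, and fixing the residues $h_i\equiv r_i \pmod q$. The construction proceeds in three phases. First, I set $\widetilde{a}_p=0$ for every $p\mid q$; since $q$ is squarefree this sifts exactly the integers in $(0,z]$ sharing a factor with $q$ and immediately satisfies the divisibility requirement $p\mid q\implies p\mid \widetilde{a}_p$. I also set $\widetilde{a}_p=0$ for every prime $p\leq \log y$ with $p\notin\mathcal{Z}$ and $p \nmid q$; by Mertens' theorem \eqref{merten3} the integers in $(0,z]$ still unsifted after this pre-sieve number at most $Cz/\log_2 y$ for a constant $C=C(q)$.

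Before running the main sieve, I pre-select the target tuple $\{h_1<h_2<\cdots<h_k\}\subseteq (y,y+w]$ with $w<y$ and $h_i\equiv r_i\pmod q$. The non-decreasing condition on the $r_i$ is compatible with the increasing order of the $h_i$ because one can place each $h_i$ freely within its residue class inside the window, and the lower bound $2y(1+(1+r_k)x)\leq \tfrac{2q}{\varphi(q)}z$ guarantees the window has enough room for every prescribed residue class. Critically, confining all $h_i$ to an interval of length $<y$ gives the smoothness condition $p\mid h_i-h_j\implies p\leq y$ for free, since $|h_i-h_j|<y$ forces every prime divisor of $h_i-h_j$ to lie strictly below $y$; this sidesteps the need to sieve at primes exceeding $y$. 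Admissibility is handled by the pre-sieve for primes $\leq \log y$ (once $y\geq y'$ is large enough that $\log y>k$) and is automatic at primes $>k$.

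Finally, for the remaining primes $p\in(\log y,y]$ with $p\nmid q$ and $p\notin\mathcal{Z}$, I run the Erd\H{o}s-Rankin covering of Lemma~\ref{erdoselementarylem} in a greedy form: enumerate the outstanding integers $g\in(0,z]\setminus\{h_1,\ldots,h_k\}$ not yet sifted, and for each $g$ pick a previously unused prime $p$ with $p>\sqrt{z}$, setting $\widetilde{a}_p\equiv g\pmod p$. Since $p>\sqrt{z}>|g-h_i|$, no $h_i$ is inadvertently sifted. The estimate \eqref{paigeapprime} is used in parallel to guarantee enough primes in each relevant AP modulo $q$ in case $\widetilde{a}_p$ must be further constrained, and the bound \eqref{reciprocalprimecondition} keeps the $\mathcal{Z}$-contribution negligible. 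The main obstacle is showing that the available prime pool outnumbers the outstanding integers: this requires exactly the upper bound $\tfrac{2q}{\varphi(q)}z\leq y(\log_2 y)/\log_3 y$, together with an intermediate Erd\H{o}s-Rankin phase (using random or greedy $\widetilde{a}_p$ for $p\in(\log y,y^{1-\eta}]$ as in \citet{banks2016limit}) to compress the outstanding count to $o(y/\log y)$ before the final greedy step. Taking $y'=y'(q,\mathbf{r},k)$ sufficiently large absorbs the implied constants in \eqref{merten3}, \eqref{paigeapprime}, and \eqref{reciprocalprimecondition}.
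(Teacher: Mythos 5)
You have the right setting in mind (modified Erd\H{o}s--Rankin, following \citet{banks2016limit}), but there is a fatal quantitative gap in the sieving phases, and the architecture of your construction differs from the paper's in a way that matters.

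Your pre-sieve at primes $p\leq\log y$ with $\widetilde{a}_p=0$ leaves, by Mertens, roughly $z\cdot\prod_{p\leq\log y}(1-1/p) \asymp z/\log_2 y$ outstanding integers. Your intermediate greedy phase over $p\in(\log y,y^{1-\eta}]$ multiplies this by $\prod_{\log y<p\leq y^{1-\eta}}(1-1/p)\asymp \log_2 y/\log y$, so after both phases the outstanding count is $\asymp z/\log y$. In the critical regime $z\asymp \frac{\varphi(q)}{2q}y\,\log_2 y/\log_3 y$, this is $\asymp y\log_2 y/(\log y\,\log_3 y)$, which \emph{exceeds} the prime pool available for the final one-to-one covering (at best $\asymp y/\log y$) by a factor $\log_2 y/\log_3 y\to\infty$. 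There is no slack, and the construction you describe cannot close. The paper's construction works because it places the $\widetilde{a}_p=0$ sieve at \emph{medium} primes $p\in(y_1,y_2]$ with $y_1=(\log y)^{1/4}$ and $y_2=y/\log_3 y$, so that $z<y_1y_2$; every integer $n\leq z$ coprime to those primes then has at most one prime factor above $y_2$ with cofactor below $z/y_2<y_1$, giving the Rankin/Buchstab-type bound $|\mathcal{N}_1|\leq \frac{z}{\log y_2}\bigl(\log(z/y_2)+O(1)\bigr)$ (cited from \cite{banks2016limit}). Because $\log(z/y_2)\asymp\log_3 y$, this is a factor $\asymp\log y/(\log_2 y\,\log_3 y)$ smaller than the Mertens heuristic $z/\log_2 y$ you are relying on. The greedy phase is then run at the \emph{tiny} primes $p\leq y_1$, picking up a further $\asymp 1/\log y_1$. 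The parameter balance $y_1=(\log y)^{1/4}$, $y_2=y/\log_3 y$ is precisely what makes $|\mathcal{N}_2|+k$ fall below $\pi(y)-\pi(y_2)$, and any plan that substitutes a Mertens count for the Buchstab count will not make the numbers close.

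Two further points. First, your justification that the one-to-one stage never removes an $h_i$ --- ``since $p>\sqrt{z}>|g-h_i|$'' --- is false as written: $g\in(0,z]$ and $h_i\in(y,2y]$ can differ by much more than $\sqrt{z}$. The correct reason is Lemma~\ref{erdoselementarylem} itself: with $p>\sqrt{z}$, each $g-h_i$ has at most one prime factor exceeding $\sqrt{z}$, so for each outstanding $g$ at most $k$ primes are forbidden, and the pool condition $|\mathcal{P}\cap(\sqrt{z},\cdot]|>|\mathcal{N}_2|+k$ suffices. Second, you pre-select $\{h_1,\ldots,h_k\}$ with $h_i\equiv r_i\pmod q$ inside a width-$<y$ window (a genuinely clean observation for the smoothness condition, nicer than the paper's narrow $(u,u+\Delta]$ window), but you never say how the pre-selected $h_i$ are guaranteed to survive the pre-sieve and greedy phases. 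The paper resolves this by choosing the $h_i$ to be primes lying in explicit residue classes $A_i\pmod{[q,P_1]}$ (constructed to avoid all previously chosen $\widetilde{a}_p$ and verified to be nonempty via the Siegel--Walfisz estimate \eqref{paigeapprime} after excluding one possible exceptional prime $\ell$); your proposal would need an analogous mechanism --- for instance a CRT constraint on the $h_i$ modulo all primes used in the greedy phase, combined with a count of survivors in the window --- which you have not supplied.
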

\begin{proof}
    The proof is very similar to the proof of \cite[Lemma 5.2]{banks2016limit}. Let $y_1,y_2,y,z$ be numbers satisfying
    $$2<y_1<y_2<y<z<y_1y_2, \quad 2 \log y_1 \leq (\log z)(\log_2 z)^{-1}.$$
    Let $\mathcal{Z}$ be any set consisting of primes satisfying (\ref{reciprocalprimecondition}). As in \cite{banks2016limit}, we have the following estimates on $\mathcal{Z}$:
    \begin{align}
        \prod_{p \in \mathcal{Z}} \left( 1-\frac{1}{p} \right)^{-1} &= 1+O\left( \frac{1}{\log z} \right), \label{merten3z}\\
        \sum_{\substack{p \in \mathcal{Z}\\ p \leq y_0}} 1 &\ll \log y_0 \text{ for all }y_0 \ge 1. \label{pntz}
    \end{align}
    For $y$ (and hence $\log z$) sufficiently large, from \eqref{reciprocalprimecondition} we may assume $2 \notin \mathcal{Z}$. Suppose further that $y_1>P^+(q)$. Let
    $$P_1 = \prod_{\substack{2<p \leq y_1\\ p \notin \mathcal{Z},p \neq \ell}} p, \quad P_2=\prod_{\substack{y_1<p \leq y_2\\ p \notin \mathcal{Z}}} p, \quad P_3=\prod_{\substack{y_2<p \leq y\\p \notin \mathcal{Z}}} p,$$
    where $\ell$ is a prime satisfying $\ell \gg \log y_1$ chosen later. For $p \mid P_2$, we choose $\widetilde{a}_p=0$, and let
    $$\mathcal{N}_1=((0,z] \cap \mathbb{Z}) \setminus \bigcup_{p \mid P_2} \{g:g \equiv \widetilde{a}_p \Mod{p} \}=\{h \in (0,z]:\gcd(h,P_2)=1 \}.$$
    From the proof of \cite[Lemma 5.2]{banks2016limit}, we get
    $$|\mathcal{N}_1| \leq \frac{z}{\log y_2} (\log(z/y_2)+O(1)).$$
     For $p \mid P_1$ and $p \nmid q$, we choose $\widetilde{a}_p$ greedily as in \citet{banks2016limit}, which is, for any finite set $S \subseteq \mathbb{Z}$,
     $$|S|=\sum_{a \Mod{p}} \sum_{\substack{g \in S\\ g \equiv a \Mod{p}}} 1,$$
     so there is an integer $\widetilde{a}_p$ such that
     $$|\{g \in S:g \equiv \widetilde{a}_p \Mod{p} \}| \geq \frac{|S|}{p},$$
     For $p \mid q$, set $\widetilde{a}_p=0$. Repeating this process with $p$ varied over all prime divisors of $P_1$, we obtain the set
    \begin{align*}
        \mathcal{N}_2 &=\mathcal{N}_1 \setminus \bigcup_{p \mid P_1} \{g:g \equiv \widetilde{a}_p \Mod{p}\}\\
        &=\mathcal{N}_1 \setminus 
 \Bigg[ \bigcup_{\substack{p \mid P_1\\ p \nmid q}} \{g:g \equiv \widetilde{a}_p \Mod{p} \} \bigcup_{\substack{p \mid P_1\\ p \mid q}} \{g:g \equiv 0 \Mod{p}\}\Bigg]
    \end{align*}
whose cardinality satisfies the bound
$$|\mathcal{N}_2| \leq |\mathcal{N}_1| \prod_{\substack{p \mid P_1\\ p \nmid q}} \left( 1-\frac{1}{p} \right) \leq 2e^{-\gamma} \frac{qz (\log(z/y_2)+O(1))}{\varphi(q) (\log y_1)(\log y_2)}$$
by Mertens' theorem (\ref{merten3}) and (\ref{merten3z}). By the prime number theorem,
\begin{align*}
    \pi(y)-\pi(y_2) &=\frac{y}{\log y}+O \left( \frac{y}{(\log y)^2}+\frac{y_2}{\log y_2} \right)\\
    &\geq \frac{y}{\log y_2}-O \left( \frac{y_2}{\log y_2}\right)
\end{align*}
if $y_2 \ge y\log(y/y_2)/\log y$, which is implied by $y_2 \ge y\log_2y/\log y$.
By (\ref{pntz}), we have
\begin{align*}
    |\{p \in (y_2,y]:p \notin \mathcal{Z}\}|-|\mathcal{N}_2| &\geq \frac{y}{\log y_2} \left( 1-2e^{-\gamma} \frac{qz \log(z/y_2)}{\varphi(q)y \log y_1} \right)\\
    &\qquad -O \left( \frac{y_2}{\log y_2}+\frac{z}{(\log y_1)(\log y_2)} \right).
\end{align*}
We now assume
$$y_1=(\log y)^{1/4}, \quad y_2=y(\log_3 y)^{-1}, \quad y<\frac{2qz}{\varphi(q)} \leq y(\log_2 y)(\log_3 y)^{-1}.$$
Substituting, we have
$$|\{p \in (y_2,y]:p \notin \mathcal{Z}\}|-|\mathcal{N}_2| \geq \frac{y}{\log y}(1-e^{-\gamma})-O \left( \frac{y}{(\log y)(\log_3 y)} \right),$$
which tends to infinity as $y \to \infty$, so
$$|\{p \in (y_2,y]:p \notin \mathcal{Z}\}|>|\mathcal{N}_2|+k$$
for $y$ sufficiently large in terms of $k$, which we assume. Applying Lemma \ref{erdoselementarylem}, if $\{h_1,\ldots,h_k\}$ is any admissible $k$-tuple contained in $\mathcal{N}_2$, then there exist integers $\{\widetilde{a}_p:p \mid 2 \ell P_3\}$ such that
$$\{h_1,\ldots,h_k\}=\mathcal{N}_2 \setminus \bigcup_{p \mid 2 \ell P_3}\{g:g \equiv \widetilde{a}_p \Mod{p} \}.$$
Note $\{p \leq y:p \notin \mathcal{Z} \}=\{p \leq y:p \mid 2 \ell P_1P_2P_3\}$, so we are done if we can show there exists an admissible $k$-tuple $\{h_1,\ldots,h_k\} \subseteq \mathcal{N}_2$ satisfying the required conditions. To do this, we first define an arithmetic progression mod $[q,P_1]$. For each prime $p \mid [q,P_1]$, let $A_i \Mod{p}$ be defined by
$$A_i=\begin{cases}
    -1, &\quad \text{ if } \widetilde{a}_p \equiv 1 \Mod{p}, p \nmid q,p \mid P_1,\\
    1, &\quad \text{ if } \widetilde{a}_p \equiv -1 \Mod{p}, p\nmid q,p \mid P_1,\\
    r_i, &\quad \text{ if } p \mid q, 
\end{cases}$$
Since these congruence conditions are prescribed modulo the distinct prime divisors of $[q,P_1]$, the Chinese Remainder Theorem yields a unique residue class $A_i \Mod{[q,P_1]}$
satisfying all of them simultaneously.
Suppose we could choose $h_i$ to be distinct primes in $(y,z]$ congruent to $A_i \Mod{[q,P_1]}$. Then, $h_i \in \mathcal{N}_1$ implies $h_i \in \mathcal{N}_2$ since $\gcd(A_i,P_1)=1$. By the prime number theorem, note $P_1=e^{(1+o(1))y_1}$ as $y$ tends to infinity, so for $i \neq j$ we have
$$p \mid h_i-h_j \implies p \mid{\prod_{\substack{p \nmid q\\p \mid P_1}}p} \text{ or } p \mid \frac{h_i-h_j}{\prod_{\substack{p \nmid q\\p \mid P_1}}p} \implies p \leq \max\{ y_1,qz/P_1 \}<y,$$
if $y$ is sufficiently large. Also, $\{h_1,\ldots,h_k\}$ is admissible since $\min\{h_1,\ldots,h_k\} \geq y>k$, which we assume. Therefore, we are left to show that we can find $k$ distinct primes in $(y,z]$ each congruent to $A_i \pmod{[q,P_1]}$.\\

To show this, note Chebyshev's bound implies $\sum_{p \leq y_1} \log p \ll 2y_1$, so $[q,P_1]<e^{3(\log y)^{1/4}}$. Therefore, by (\ref{paigeapprime}), for each $1 \leq i \leq k$ we have
$$\sum_{\substack{u \leq p \leq u+\Delta\\ p \equiv A_i \Mod{[q,P_1]}}} \log p=\frac{\Delta}{\varphi([q,P_1])}+O\left(y \exp \left(-c'\sqrt{\log y} \right) \right)$$
uniformly for $2 \leq \Delta \leq y \leq u \leq z$ and $c'$ an absolute constant, apart from when possibly $[q,P_1]$ is a multiple of some $q_1$ depending on $u$ satisfying $P^+(q_1) \gg_c \log_2 u \gg \log y_1$. Therefore we now pick $\ell$ such that this possibility doesn't occur. Choosing $\Delta=ye^{-(\log y)^{1/4}}$, we have
$$\sum_{\substack{u \leq p \leq u+\Delta\\ p \equiv A_i \Mod{[q,P_1]}}} \log p \gg y \exp \left( -4(\log y)^{1/4} \right)$$
uniformly for $y \leq u \leq z$, so for each $i$, the left hand side is a sum of at least $k$ primes for every $A_i$ if $y$ is sufficiently large in terms of $k$. Now assume $y$ sufficiently large in terms of $r_k$ so that
$$2(1+(1+r_k)) \leq (\log_2 y)(\log_3 y)^{-1},$$
and let $x \geq 1$ be any number such that
$$2y(1+(1+r_k)x) \leq \frac{2q}{\varphi(q)} z \leq y(\log_2 y)(\log_3 y)^{-1}.$$
Let $$u=r_k xy+y,$$ so that the interval $(u,u+\Delta]$ is contained in $(y,z]$. For $1 \leq i \leq k$, we choose $h_i$ to be distinct primes in $(u,u+\Delta]$ such that $h_i \equiv A_i \Mod{[q,P_1]}$, and this is possible since each arithmetic progression $A_i \Mod{[q,P_1]}$ contains at least $k$ primes in the interval. Therefore, we are done.
\end{proof}

\section{Proofs of Main Result} \label{mainresultsection}
\setcounter{section}{1}
\setcounter{thm}{4}
In this section we combine the results of Sections \ref{sec:modifiedmaynardtaosieve} and \ref{sec:erdosrankin} to prove Theorem \ref{mainmodthm}.
\begin{thm}
    Let $q \ge 2$ be a squarefree{} integer, and $r,m \in \mathbb{Z}^+$ with $r>1$. Recall $\pi(x;q,A)$, $M$, and $J_r(m,M)$ from Definition \ref{defn:notationforthms}.
    For any $a_1,\ldots,a_{M{}} \in (\mathbb{Z}/q\mathbb{Z})^\times$, let
    $$A=\{(c_1,\ldots,c_m): \exists j \in J_r(m,M) \text{ s.t. }c_i=a_{j(i)} \forall 1 \leq i \leq m\}.$$
    Then $\pi(x;q,A) \to \infty$ as $x \to \infty$.
\end{thm}
\begin{proof}
    The case $m=1$ is known. Fix $k \geq m \geq 2$, let $\e>0$ be sufficiently small, and let $k$ be a sufficiently large multiple of $M{}$. Let $\mathbf{r}=(r_1,\ldots,r_k) \in \mathbb{R}^k$ be given by
    $$\mathbf{r}=(a_1 \mod{q},\ldots,a_1 \mod{q},a_2 \mod{q},\ldots,a_2 \mod{q},\ldots,a_{M{}} \mod{q},\ldots,a_M{} \mod{q}),$$
    where there are  $k/M{}$ consecutive copies of each $a_i \Mod{q}$ appearing in $\mathbf{r}$. We choose suitable representatives $r_i$ mod $q$ such that $r_1 \leq \cdots \leq r_k$. Let $N$ be sufficiently large in terms of $k,m,\e$, and define parameters
    $$x=\e^{-1}, \quad y= \e \log N, \quad z=\varphi(q)y(\log_2(y))(2q\log_3(y))^{-1}.$$
    If $N$ is sufficiently large in terms of $\mathbf{r}$ and $k$ as well, then with $y'(q,\mathbf{r},k)$ as in Lemma \ref{lemwithmod}, we have
    $$x>1, \quad y \geq y'(q,\mathbf{r},k), \quad 2y(1+(1+r_k)x) \leq \frac{2q}{\varphi(q)}z \leq y(\log_2 y)(\log_3 y)^{-1}.$$
    Let $Z_{N^{4 \e}}$ be defined as in Section \ref{sec:modifiedmaynardtaosieve}, $W= \prod_{p \leq \e \log N, p \nmid Z_{N^{4 \e}}} p$, and let
    $$ \mathcal{Z}= \begin{cases}
        \emptyset ,&\quad \text{ if } Z_{N^{4 \e}}=1\\
        \{Z_{N^{4 \e}}\}, &\quad \text{ if } Z_{N^{4 \e}} \neq 1.
    \end{cases}$$
    By Lemma \ref{lemwithmod}, there is a set $\{a_p:p \leq y, p \notin \mathcal{Z}\}$ and an admissible $k$-tuple $\{h_1,\ldots,h_k\} \subseteq (y,z]$ such that
\begin{align*}
    \{h_1,\ldots,h_k\}&=\left((0,z] \cap \mathbb{Z} \right) \setminus \bigcup_{\substack{p \leq y\\ p \notin \mathcal{Z}}} \{g: g \equiv \widetilde{a}_p \Mod{p}\},
\end{align*}
and $p \mid \widetilde{a}_p$ whenever $p \mid q$.
Moreover, for $1 \leq i<j \leq k$,
$$p \mid h_i-h_j \implies p\leq y,$$
and define the partition
$$\mathcal{H}=\mathcal{H}_1 \sqcup \cdots \sqcup \mathcal{H}_{M{}}$$
such that for each $j=1,2,\ldots,M{}$ we have
\begin{equation*}
        h \equiv r_{jk/M} \Mod{q}
\end{equation*}
for all $h \in \mathcal{H}_j$. Let $b \in \mathbb{Z}$ satisfying
$$b \equiv -q^{-1}\widetilde{a}_p \Mod{p}$$
if $p \leq y, p \notin \mathcal{Z}$ and $p \nmid q$, whereas if $p \mid q$ set $b \equiv 0 \Mod{p}$. Therefore, the assumptions of Proposition \ref{maynardtaosieve} hold, and there is some $n \in (N,2N]$ with $n \equiv b \Mod{W}$ and some set of distinct indices $\{i_1,\ldots,i_{m+1}\}$ such that
    \begin{align*}
        1 \leq |\mathcal{H}_{i}(n) \cap \mathbb{P}| \leq r-1 &\quad \text{ for all } i \in \{i_1,\ldots,i_{m+1}\},\\
        |\mathcal{H}_i(n) \cap \mathbb{P}| =0 &\quad \text{ for all } i_1<i<i_{m+1} \text{ such that } i \neq i_1,\ldots,i_{m+1}.
    \end{align*}
To prove they are consecutive primes, note
$$(qn,qn+z] \cap \mathbb{P} = \mathcal{H}(n) \cap \mathbb{P},$$
since if $g \in (0,z]$ and $g \notin \{h_1,\ldots,h_k\}$, then $qn+g \equiv qb+\widetilde{a}_p \equiv -\widetilde{a}_p+\widetilde{a}_p \equiv 0\Mod{p}$ for some $p \leq y$ with $p \notin \mathcal{Z}$, so the primes in $\mathcal{H}(n)$ are consecutive primes. Therefore, there must exist consecutive primes $p_n,\ldots,p_{n+m-1} \in [qN,3qN]$ such that $p_{n+i-1} \equiv a_{j'(i)} \mod{q}$ for all $1 \le i \le m$, where $j'(i+1)\geq j'(i)$ for all $1 \le i \le m$ and no consecutive $r$ of $(j'(i))_{i=1}^m$ are congruent mod $q$. Taking $N \to \infty$, we are done.
\end{proof}
\setcounter{section}{7}
\setcounter{thm}{0}

\section{Number of Attainable Residue Class Patterns} \label{maincorsection}
In this section, we use Theorem \ref{mainmodthm} and recursive combinatorial arguments to obtain lower bounds for the number of $m$-tuples occurring infinitely often among consecutive primes.
\begin{cor} \label{smallgeneralcor}
    For $m,r \in \mathbb{Z}^+$ with $2 \leq r \leq m/100$, recall $M$ from Definition \ref{defn:notationforthms}.
    For any constant $0<c<1$, if $q$ is squarefree and $\varphi(q) \geq \lceil \frac{M{}}{\lfloor c(m-1)/(r-1) \rfloor} \rceil$, there are at least
    $$4 \lfloor (1-c)m \rfloor\left(\left\lceil \frac{M{}}{\lfloor c(m-1)/(r-1) \rfloor} \right\rceil\right)^{-5} \varphi(q)(\varphi(q)-1)$$
    $m$-tuples $\mathbf{a}$ such that $\pi(x;q,\mathbf{a}) \to \infty$ as $x \to \infty$.
\end{cor}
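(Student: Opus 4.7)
I apply the iterative deletion strategy sketched in Section 2 to a carefully chosen family of $M$-tuples. Set $k := \lfloor c(m-1)/(r-1)\rfloor$ and $M' := \lceil M/k \rceil$, so $\varphi(q)\geq M'$ by hypothesis. Partition $\{1,\ldots,M\}$ into $M'$ blocks $B_1,\ldots,B_{M'}$ each of size at most $k$, and define
\[
S_1 := \left\{(a_1,\ldots,a_M)\in((\mathbb{Z}/q\mathbb{Z})^\times)^M : a|_{B_\ell}\equiv v_\ell,\ v_1,\ldots,v_{M'}\text{ pairwise distinct}\right\},
\]
so $|S_1|=\varphi(q)(\varphi(q)-1)\cdots(\varphi(q)-M'+1)$; identify each $(a)\in S_1$ with its tuple $(v_1,\ldots,v_{M'})$.

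For each $(a)\in S_1$, Theorem \ref{mainmodthm} gives $\pi(x;q,A_{(a)})\to\infty$, where $A_{(a)} := \{(a_{j(1)},\ldots,a_{j(m)}) : j\in J_r(m,M)\}$, and pigeonhole over the finite set $A_{(a)}$ produces a specific attainable $b\in A_{(a)}$. Letting $\tilde j(i)$ denote the block index of $j(i)$, every $b\in A_{(a)}$ has the form $(v_{\ell_1}^{n_1},\ldots,v_{\ell_s}^{n_s})$ with $\ell_1<\cdots<\ell_s$ (as $\tilde j$ is non-decreasing and the $v_\ell$ are distinct), and the $J_r(m,M)$-constraint restricted to one block of size $k$ forces each run length to satisfy $n_i\leq k(r-1)\leq c(m-1)$, so $s\geq \lceil m/(k(r-1))\rceil \geq 2$ for $c<1$. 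Now iterate: while $S_1\neq\emptyset$, pick $(a)\in S_1$, extract an attainable $b\in A_{(a)}$, append $b$ to a running set $T$, and remove from $S_1$ every $(a')$ with $b\in A_{(a')}$. For fixed $b=(d_1^{n_1},\ldots,d_s^{n_s})$, the count removed equals
\[
R(b) = \binom{M'}{s}(\varphi(q)-s)(\varphi(q)-s-1)\cdots(\varphi(q)-M'+1),
\]
namely the number of $(v'_1,\ldots,v'_{M'})$ of distinct residues containing $d_1,\ldots,d_s$ as an ordered subsequence; each iteration produces a new $b$, so $|T|\geq |S_1|/\max_b R(b)$.

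The remaining task is to arrange this into the claimed bound. A single pass at $s=2$ yields only $|T|\geq 2\varphi(q)(\varphi(q)-1)/(M'(M'-1))$, which lacks the linear $m$ factor; one recovers $(1-c)m$ by organising the attainable two-valued tuples $b=(d_1^{n_1},d_2^{n_2})$ by their break point $n_1$ and running an auxiliary deletion over the $\varphi(q)(\varphi(q)-1)$ ordered pairs $(d_1,d_2)$ of distinct residues. Under the constraint $n_i \leq k(r-1)\leq c(m-1)$, the admissible break points form an interval of length $\gtrsim (1-c)m$ after absorbing multiplicative constants, and the crude estimate $\binom{M'}{s}\leq (M')^s$ combined with slack in the preimage counts is absorbed into the $(M')^{-5}$ denominator, producing
\[
|T|\geq 2(1-c)m\,(M')^{-5}\varphi(q)(\varphi(q)-1).
\]

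\textbf{Main obstacle.} The chief difficulty is the combinatorial bookkeeping that isolates the $(1-c)m$ factor, which no single deletion pass delivers. The refinement runs an auxiliary deletion along ordered pairs of distinct residues and admissible break points, trading a polynomial loss in $M'$ (hence the denominator $(M')^{-5}$ in place of the naive $(M')^{-2}$) for a linear gain of $(1-c)m$. This bookkeeping, while delicate, requires no further arithmetic input beyond Theorem \ref{mainmodthm}.
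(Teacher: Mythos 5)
Your set-up of $S_1$ and the basic iterative deletion match the spirit of the paper's proof, and your single-pass count of $2\varphi(q)(\varphi(q)-1)/(M'(M'-1))$ is essentially what the paper's recursive deletion also produces before any further processing. The genuine gap is the factor $(1-c)m$, and your ``auxiliary deletion over break points'' does not supply it. Theorem \ref{mainmodthm} asserts only that \emph{some} single $m$-tuple in $A_{(a)}$ is attainable for each $(a)\in S_1$; it gives no control over which break-point pattern $(n_1,\ldots,n_s)$ that tuple realises and certainly does not furnish a distinct attainable tuple for each admissible break point. Since each invocation of the theorem produces one tuple $b$ and your deletion rule removes $(a')$ solely according to the ordered subsequence $(d_1,\ldots,d_s)$ underlying $b$, re-indexing the deletion by break points does not generate any additional attainable tuples: you cannot manufacture roughly $(1-c)m$ ``variants'' of $b$ with different $n_1$ because none of them is certified by the theorem. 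As written, this part of the proposal is a plan, not a proof, and the plan as stated would not go through.

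The paper obtains the linear-in-$m$ factor by a completely different mechanism: after the deletion process has produced a set $G_0$ of good piecewise-constant tuples, it runs the Dirichlet/pigeonhole \emph{shifting} argument (the same device as in Proposition \ref{shiudirichlet}) on each element of $G_0$. Shifting replaces $(a_1,\ldots,a_m)$ by $(a_2,\ldots,a_m,a)$ for a suitable new residue $a$, and this preserves goodness. Because every residue value occurs in a tuple of $G_0$ in runs of total length at most $c(m-1)$, and because the deletion rule is specifically chosen so that no two tuples in $G_0$ share a consecutive pair of distinct entries, the first $\lfloor(1-c)m\rfloor$ shifts of distinct elements of $G_0$ remain pairwise distinct, which yields the extra factor $(1-c)m$. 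Two things follow for your proposal: you need to add this shifting step, and you also need to change your deletion rule, since removing merely the $(a')$ with $b\in A_{(a')}$ does not guarantee the no-shared-consecutive-pair property on $G_0$ (two surviving $M$-tuples could still give good tuples sharing a pair like $(d,d')$ without either's full subsequence being contained in the other's block values), and without that property the shifted families can collide.
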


\begin{proof}
    Using Theorem \ref{mainmodthm}, for any $a_1,\ldots,a_{M{}}$, there must exist $\mathbf{a}=(a_{j(1)},\ldots,a_{j(m)})$ with $j$ increasing and no consecutive $r$ values the same, such that
    $$\pi(x;q,\mathbf{a}) \to \infty \text{ as } x \to \infty.$$
    We call a $m$-tuple $\mathbf{a}$ with this property `good'. Define a set $S_1$ consisting of all $M{}$-tuples with entries in $(\mathbb{Z}/q\mathbb{Z})^\times$ of the form
   $$(\underbrace{a_1,\ldots,a_1}_{\lfloor c(m-1)/(r-1) \rfloor\text{ times}},\underbrace{a_2,\ldots,a_2}_{\lfloor c(m-1)/(r-1) \rfloor\text{ times}},\cdots), \quad a_i \text{ distinct}.$$
   Note $S_1$ is non-empty since $\varphi(q) \geq \lceil \frac{M{}}{\lfloor c(m-1)/(r-1) \rfloor} \rceil$ by assumption. We pick good $m$-tuples with the following recursive process.
   \begin{enumerate}
       \item Take a $M$-tuple $(a_1,\ldots,a_1,a_2,\ldots) \in S_1$. By Theorem \ref{mainmodthm}, there is a good $m$-tuple of the form
       $$(b_1,\ldots,b_1,b_2,\ldots,b_2,\ldots,b_{\ell_1},\ldots,b_{\ell_1}),$$
       where $2\leq \ell_1 \leq \lceil \frac{M{}}{\lfloor c(m-1)/(r-1) \rfloor} \rceil$.
       \item Define
       \begin{align*}
       S_2 := S_1 \setminus \{ (a_1,\ldots,a_1,a_2,\ldots,a_2,\ldots):&\exists i,i',j \text{ s.t. } i<i' \text{ and } a_i=b_j,a_{i'}=b_{j+1}\}.
       \end{align*}
       \item Take any element from $S_2$, then repeat the above process until $S_k$ is empty.
   \end{enumerate}
   The good $m$-tuples obtained from this process must be piecewise constant with at least $2$ distinct entries, and no two good tuples have the same two consecutive distinct entries in the same order. To find the minimum number of good tuples obtained, note
   $$\text{number of good tuples obtained }=\text{ number of times the process repeated,}$$
   which can be minimised if at each step $k$ a good $m$-tuple $(b_1^{(k)},\ldots,b_1^{(k)},\ldots,b_{\ell_k}^{(k)},\ldots,b_{\ell_k}^{(k)})$ is obtained such that $$S_{k} \cap\{ (a_1,\ldots,a_1,a_2,\ldots,a_2,\ldots):\exists i,i',j \text{ s.t. } i<i' \text{ and } a_i=b_j^{(k)},a_{i'}=b_{j+1}^{(k)}\}$$
   is maximised. However, the size of this set is clearly at most the size of
   \begin{align*}
       &S_1 \cap\{ (a_1,\ldots,a_1,a_2,\ldots,a_2,\ldots):\exists i,i',j \text{ s.t. } i<i' \text{ and } a_i=b_j^{(k)},a_{i'}=b_{j+1}^{(k)}\}
   \end{align*}
   Therefore, the number of elements removed every time is
   \begin{align*}
       &\leq \#(\text{choices of $j$}) \cdot \#(\text{choices of $i,i'$})\\
       &\quad \ \#(\text{choices of $a_k$ for $k \neq i,i'$}) \cdot \#(\text{choices of order for $a_i$ with $a_i$ before $a_{i'}$})\\
       &\leq (\ell_k-1)\binom{\lceil \frac{M{}}{\lfloor c(m-1)/(r-1) \rfloor} \rceil}{2} \binom{\varphi(q)-2}{\lceil \frac{M{}}{\lfloor c(m-1)/(r-1) \rfloor} \rceil-2} \cdot \left \lceil \frac{M{}}{\lfloor c(m-1)/(r-1) \rfloor} \right \rceil! \cdot \frac{1}{2}.
   \end{align*}
   To maximise the number of elements removed, we suppose for all $k$, $\ell_{k}=\lceil \frac{M{}}{\lfloor c(m-1)/(r-1) \rfloor} \rceil$, since this is the greatest possible value of $\ell_{k}$. Repeating this process until it terminates, the number of good tuples obtained in this way is
   \begin{align*}
       &\geq 4\binom{\varphi(q)}{\lceil \frac{M{}}{\lfloor c(m-1)/(r-1) \rfloor} \rceil} \binom{\varphi(q)-2}{\lceil \frac{M{}}{\lfloor c(m-1)/(r-1) \rfloor} \rceil-2}^{-1} \left(\left\lceil \frac{M{}}{\lfloor c(m-1)/(r-1) \rfloor} \right\rceil\right)^{-3}\\
       &\geq 4\left(\left\lceil \frac{M{}}{\lfloor c(m-1)/(r-1) \rfloor} \right\rceil\right)^{-5} \varphi(q)(\varphi(q)-1).
   \end{align*}
   By Dirichlet's Theorem on primes in arithmetic progressions, for each $a \in (\mathbb{Z}/q\mathbb{Z})^\times$, there are infinitely many primes $p \equiv a \Mod{q}$. Therefore, for each good $m$-tuple $$\mathbf{a}=(a_1,\ldots,a_1,\ldots,a_\ell,\ldots,a_{\ell})$$ obtained from the above process, by the pigeonhole principle we can create another good tuple by shifting: there exists $a \in (\mathbb{Z}/q\mathbb{Z})^\times$ such that
   $$\mathbf{a}' := (a_1,\ldots,a_1,\ldots,a_\ell,\ldots,a_\ell,a)$$
   is good, and we can keep shifting the resultant good tuple to get another good tuple. \\
   
   Let $G_0 \subseteq \prod_{i=1}^m (\mathbb{Z}/q\mathbb{Z})^\times$ be the set of good tuples obtained from the above recursive process, and let $G_i$ be the set of good tuples obtained from shifting each tuple in $G_0$ $i$ times. We claim that $$\left|\bigcup_{i=0}^{\lfloor (1-c)m \rfloor} G_{i}\right|=\lfloor (1-c)m \rfloor|G_0|,$$ i.e. all good tuples obtained from shifting at most $\lfloor (1-c)m \rfloor$ times are distinct. Indeed, observe from steps (1) and (2), $G_0$ has the following property: If $\mathbf{a},\mathbf{b} \in G_0$, then there does not exist $1 \leq i,j \leq m$ such that $a_i=b_j$ and $a_{i+1}=b_{j+1}$. Also, $a_m$ appears in $\mathbf{a}$ at most $\lfloor c(m-1)/(r-1) \rfloor \cdot (r-1) \leq c(m-1)<cm$ times. Therefore, if we shift $\mathbf{a}$ and $\mathbf{b}$ each at most $\lfloor (1-c)m \rfloor$ times to obtain $\mathbf{a}'$ and $\mathbf{b}'$ respectively, we must have $(a_1',a_2') \neq (b_1',b_2')$ and so $\mathbf{a}' \neq \mathbf{b}'$. Thus, shifting each $m$-tuple in $G_0$ $\lfloor (1-c)m \rfloor$ times, we obtain a total of
   $$4\lfloor (1-c)m \rfloor\left(\left\lceil \frac{M{}}{\lfloor c(m-1)/(r-1) \rfloor} \right\rceil\right)^{-5} \varphi(q)(\varphi(q)-1)$$
   $m$-tuples $\mathbf{a}$ such that $\pi(x;q,\mathbf{a}) \to \infty$ as $x \to \infty$.
\end{proof}
To simplify the final expression, we have
\setcounter{section}{1}
\setcounter{thm}{5}
\begin{cor} \label{smallcor}
     For any $0<c<1$, if $q \ge 2$ is squarefree and $\varphi(q) > 8c^{-1}e^2 (\log m)^2$, then for $m$ sufficiently large,$$\#\left\{\mathbf{a} \in \prod_{i=1}^m (\mathbb{Z}/q\mathbb{Z})^\times:\lim_{x \to \infty} \pi(x;q,\mathbf{a})=\infty \right\} \geq \frac{\lfloor (1-c)m \rfloor c^5}{256e^{10}(\log m)^{10}}\varphi(q)(\varphi(q)-1).$$
\end{cor}
\begin{proof}
    Letting $r=\lceil \log m+1 \rceil$ in Theorem \ref{mainmodthm}, we have $M{}/(m-1) < 8e^2 \log m$ as $m \to \infty$. Using Corollary \ref{smallgeneralcor}, for $m$ sufficiently large there are at least
    \begin{align*}
        &\geq \frac{4\lfloor (1-c)m \rfloor c^5}{4^5e^{10}(\log m)^{10}}\varphi(q)(\varphi(q)-1)
    \end{align*}
    $m$-tuples $\mathbf{a}$ such that $\pi(x;q,\mathbf{a}) \to \infty$ as $x \to \infty$.
\end{proof}
\begin{rem}
    \citet{shiu2000strings} showed for all $a \in (\mathbb{Z}/q\mathbb{Z})^\times$ and $\mathbf{a}=(a,\ldots,a) \in \prod_{i=1}^m (\mathbb{Z}/q\mathbb{Z})^\times$,
    $$\lim_{x \to \infty} \pi(x;q,\mathbf{a}) =\infty.$$
    From Proposition \ref{shiudirichlet}, we have
    $$\#\left\{\mathbf{a} \in \prod_{i=1}^m (\mathbb{Z}/q\mathbb{Z})^\times:\lim_{x \to \infty} \pi(x;q,\mathbf{a})=\infty\right\} \geq m\varphi(q).$$
    Therefore, Corollary \ref{smallcor} provides a better bound when $$\varphi(q)>256e^{10}c^{-5}(1-c)^{-1}(\log m)^{10}+1.$$
    To minimise this, we take $c=5/6$, and we get a better bound when
    $$\varphi(q)>3823e^{10}(\log m)^{10}+1.$$
\end{rem}
We can get a better lower bound for the number of patterns attainable by consecutive primes when $\varphi(q)$ is larger. In this case, the `shifting' argument does not generate many more good tuples, so we do not consider it here.
\begin{cor} \label{largegeneralbound}
    For $m,r \in \mathbb{Z}^+$, recall $M$ from Definition \ref{defn:notationforthms}.
    If $q \ge 2$ is squarefree and $\varphi(q) \geq M{}$, there are at least
    $$\frac{\lceil m/(r-1) \rceil!}{M{}(M{}-1)\cdots(M{}-\lceil m/(r-1) \rceil+1)} \cdot \varphi(q)(\varphi(q)-1) \cdots (\varphi(q)-\lceil m/(r-1) \rceil+1)$$
    $m$-tuples $\mathbf{a}$ such that $\pi(x;q,\mathbf{a}) \to \infty$ as $x \to \infty$.
\end{cor}
\begin{proof}
    Using Theorem \ref{mainmodthm}, for any $a_1,\ldots,a_{M{}}$, there must exist $\mathbf{a}=(a_{j(1)},\ldots,a_{j(m)})$ with $j$ increasing and no consecutive $r$ values the same, such that
    $$\pi(x;q,\mathbf{a}) \to \infty \text{ as } x \to \infty.$$
    We call a $m$-tuple $\mathbf{a}$ with this property `good'. Define a set $S_1$ consisting of all $M{}$-tuples with distinct entries in $(\mathbb{Z}/q\mathbb{Z})^\times$. We pick good $m$-tuples with the following recursive process.
   \begin{enumerate}
       \item Take an $M$-tuple $(a_1,\ldots,a_M) \in S_1$. By Theorem \ref{mainmodthm}, there is a good $m$-tuple of the form
       $$(b_1,\ldots,b_1,b_2,\ldots,b_2,\ldots,b_{\ell_1},\ldots,b_{\ell_1}),$$
       where $\lceil m/(r-1) \rceil \leq \ell_1 \leq m$.
       \item Define
       \begin{align*}
       S_2 := S_1 \setminus \{ (a_1,\ldots,a_M):&\text{ there exists increasing injection } \sigma:\{1,\ldots,\ell_1\} \to \{1,\ldots,M\}\\
       &\text{ such that } b_i=a_{\sigma(i)} \text{ for all }i\}.
       \end{align*}
       \item Take any element from $S_2$, then repeat the above process until $S_k$ is empty.
   \end{enumerate}
   The good $m$-tuples obtained from this process must be piecewise constant with at least $m/(r-1)$ distinct entries, and no two good tuples have the same two consecutive distinct entries in the same order. To find the minimum number of good tuples obtained, note
   $$\text{number of good tuples obtained }=\text{ number of times the process repeated,}$$
   which can be minimised if at each step $k$ a good $m$-tuple $(b_1^{(k)},\ldots,b_1^{(k)},\ldots,b_{\ell_{k}}^{(k)},\ldots,b_{\ell_{k}}^{(k)})$ is obtained such that
   \begin{align*}
   S_{k} \cap\{ (a_1,\ldots,a_M):&\text{ there exists increasing injection } \sigma:\{1,\ldots,\ell_k\} \to \{1,\ldots,M\}\\
       &\text{ such that } b_i^{(k)}=a_{\sigma(i)} \text{ for all }i\}
   \end{align*}
   is maximised. However, the size of this set is clearly at most the size of
   \begin{align*}
   S_{1} \cap\{ (a_1,\ldots,a_M):&\text{ there exists increasing injection } \sigma:\{1,\ldots,\ell_k\} \to \{1,\ldots,M\}\\
       &\text{ such that } b_i^{(k)}=a_{\sigma(i)} \text{ for all }i\}
   \end{align*}
   Therefore, the number of elements removed every time is
   \begin{align*}
       &\leq \#(\text{choices for }a_j \neq b_i) \cdot \#(\text{choices of order for }a_{\sigma(i)} = b_i^{(k)} \forall i).\\
       &\leq \binom{\varphi(q)-\ell_k}{M-\ell_k} \cdot \frac{M!}{\ell_k!}.
   \end{align*}
   To maximise the number of elements removed, we suppose for all $k$, $\ell_{k}=\lceil \frac{m}{r-1} \rceil$, since this is the smallest possible value of $\ell_{k}$. Repeating this process until it terminates, the number of good tuples obtained is
   \begin{align*}
       &\geq \left\lceil \frac{m{}}{r-1} \right\rceil ! \cdot \binom{\varphi(q)}{M} \binom{\varphi(q)-\lceil \frac{m}{r-1} \rceil}{M-\lceil \frac{m}{r-1} \rceil}^{-1}\\
       &\geq \binom{M}{\lceil m/(r-1) \rceil}^{-1} \varphi(q)(\varphi(q)-1)\cdots (\varphi(q)-\lceil m/(r-1) \rceil+1).
   \end{align*}
\end{proof}
Simplifying the expression, we have 
\begin{cor}
    If $q \ge 2$ is squarefree{} and $\varphi(q) > 8e^2 m\log m$, then for $m$ sufficiently large,
    $$\# \left\{ \mathbf{a} \in \prod_{i=1}^m (\mathbb{Z}/q\mathbb{Z})^\times:\lim_{x \to \infty} \pi(x;q,\mathbf{a})=\infty \right\} \gg e^{-O(m \log_2 m/\log m)} \varphi(q)^{m/\lceil \log m \rceil}.$$
\end{cor}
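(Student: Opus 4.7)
The plan is to apply Corollary \ref{largegeneralbound} with $r = \lfloor \log m \rfloor + 1$, exactly as motivated in the introductory heuristic. The main work is then a careful asymptotic analysis of the parameter $M$ and of the two factors in the bound produced by Corollary \ref{largegeneralbound}.

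First I would pin down the size of $M$ as $m \to \infty$. Using Stirling's approximation $r! = \sqrt{2\pi r}(r/e)^r(1+O(1/r))$, taking logarithms and dividing by $r-1$ yields
$$\left(\frac{2^{3r-2}(r-1)^{2r-1}}{r!}\right)^{1/(r-1)} = 8er\bigl(1+O(\log r/r)\bigr),$$
and a direct expansion gives $(m(r-1)+r)^{1/(r-1)} = e\bigl(1+O(\log_2 m/\log m)\bigr)$ when $r-1 = \lfloor \log m \rfloor$. Multiplying, one obtains $M = (1+o(1)) \cdot 8e^2 m\log m$, so the hypothesis $\varphi(q) > 8e^2 m\log m$ guarantees $\varphi(q) \geq M$ for all sufficiently large $m$, justifying the application of Corollary \ref{largegeneralbound}.

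The remaining step is to estimate the two pieces of the resulting bound. Writing $k = \lceil m/(r-1)\rceil \sim m/\log m$, the standard inequality $\binom{M}{k} \leq (eM/k)^k$ gives
$$\binom{M}{k} \leq \bigl(8e^3(\log m)^2(1+o(1))\bigr)^{m/\log m} = \exp\bigl(O(m\log_2 m/\log m)\bigr).$$
For the falling factorial, the ratio $k/\varphi(q) = O(1/(\log m)^2) = o(1)$ under our hypothesis, so
$$\varphi(q)(\varphi(q)-1)\cdots(\varphi(q)-k+1) \geq \varphi(q)^k\bigl(1 - k/\varphi(q)\bigr)^k \geq \varphi(q)^{m/\log m}\exp\bigl(-o(m/\log m)\bigr),$$
where $\varphi(q)\geq 1$ absorbs the difference between $k$ and $m/\log m$. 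Dividing these two estimates and noting $\log_2 m \geq 1$ so that $m/\log m = O(m\log_2 m/\log m)$, we obtain the asserted lower bound $\exp(-O(m\log_2 m/\log m))\varphi(q)^{m/\log m}$.

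I do not anticipate any single step being a serious obstacle. The only delicate point is ensuring the Stirling-type expansion for $M$ is sharp enough that the strict inequality $\varphi(q) > 8e^2 m\log m$ in the hypothesis implies $\varphi(q) \geq M$ for all sufficiently large $m$; once this is verified, the rest is a mechanical application of $\binom{M}{k}\leq (eM/k)^k$ and $(1-x)^k \geq \exp(-2kx)$ for $x = o(1)$, with every error term comfortably absorbed into $\exp(O(m\log_2 m/\log m))$.
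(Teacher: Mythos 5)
Your plan is essentially the paper's own argument: set $r\approx\log m+1$, check $\varphi(q)\geq M$, and estimate $\binom{M}{k}^{-1}\varphi(q)(\varphi(q)-1)\cdots(\varphi(q)-k+1)$ with $k=\lceil m/(r-1)\rceil$.  Your handling of $\binom{M}{k}$ via $\binom{M}{k}\leq(eM/k)^k$ and your explicit treatment of the falling factorial by $\varphi(q)(\varphi(q)-1)\cdots(\varphi(q)-k+1)\geq\varphi(q)^k(1-k/\varphi(q))^k$ are both fine and marginally more transparent than the paper, which invokes Stirling for the binomial and then waves at the falling factorial (``it suffices to consider the coefficient of the leading order term'').

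However, the step you yourself single out as ``the only delicate point'' is in fact the one that fails. From $M=(1+o(1))\cdot 8e^2m\log m$ you cannot conclude that $\varphi(q)>8e^2m\log m$ forces $\varphi(q)\geq M$: the $o(1)$ could be positive. And it is. A more careful expansion gives, with $s=r-1$,
$$\left(\tfrac{2^{3r-2}(r-1)^{2r-1}}{r!}\right)^{1/s}=8es\cdot\exp\!\Big(\tfrac{-\tfrac12\log s+\tfrac12\log(2/\pi)}{s}+O(s^{-2})\Big),\qquad (ms+r)^{1/s}=e\cdot\exp\!\Big(\tfrac{\log s}{s}+O(\tfrac1{ms})\Big)$$
(for $s=\log m$), so that $\log\big(C_1C_2m\big)-\log\!\big(8e^2m\log m\big)=\tfrac{\tfrac12\log s+O(1)}{s}>0$ once $s$ is large, i.e.\
$$M/(8e^2m\log m)=1+\tfrac{\log_2 m}{2\log m}\big(1+o(1)\big)>1$$
for all sufficiently large $m$ (one can confirm numerically, e.g.\ at $m=e^{10}$ the ratio is about $1.09$, at $m=e^{50}$ about $1.04$). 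So under the stated hypothesis $\varphi(q)>8e^2m\log m$, the required inequality $\varphi(q)\geq M$ does \emph{not} hold for generic $q$, and Corollary~\ref{largegeneralbound} cannot be applied. The paper has exactly the same defect: the assertion ``$M/(m-1)<8e^2\log m$ as $m\to\infty$'' in its proof of this corollary is false by the same computation.

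This is a removable blemish rather than a structural failure: replacing the hypothesis by $\varphi(q)>Cm\log m$ for any fixed $C>8e^2$ (or even $\varphi(q)>8e^2m\log m\cdot(1+\log_2 m/\log m)$) restores $\varphi(q)\geq M$ for $m$ large, and since the final answer is $\gg e^{-O(m\log_2 m/\log m)}\varphi(q)^{m/\log m}$, any such change of constant is absorbed into the $O(\cdot)$ in the exponent. But as written, neither your argument nor the paper's actually justifies the threshold $8e^2m\log m$; you should either prove the sign of the second-order term differently (you won't be able to) or weaken the hypothesis.
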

\begin{proof}
    Letting $r=\lceil \log m+1 \rceil$ in Theorem \ref{mainmodthm}, we have $M{}/(m-1) < 8e^2 \log m$ as $m \to \infty$. For $m$ sufficiently large, using Stirling's approximation we have
    \small
    \begin{align*}
        &\binom{M}{\lceil m/(r-1) \rceil}\\
        &= \frac{M!}{\lceil m/(r-1) \rceil! (M-\lceil m/(r-1) \rceil)!}\\
        &\ll \frac{M^M e^{-M}}{\sqrt{\lceil m/(r-1) \rceil} \lceil m/(r-1) \rceil^{\lceil m/(r-1) \rceil} e^{-\lceil m/(r-1) \rceil} (M-\lceil m/(r-1) \rceil)^{M-\lceil m/(r-1) \rceil} e^{\lceil m/(r-1) \rceil-M}}\\
        &\ll \frac{\left( \frac{M}{\lceil m/(r-1) \rceil}-1 \right)^{\lceil m/(r-1) \rceil}}{\sqrt{\lceil m/(r-1) \rceil}\left( 1-\frac{\lceil m/(r-1) \rceil}{M} \right)^{M}}\\
        &\ll \frac{(8e^2 (\log m)^2)^{m/\log m}}{\sqrt{m/\log m} \ e^{-m/\log m}} \\
        &\ll e^{O(m \log_2 m/\log m)}.
    \end{align*}
    \normalsize
    Therefore by Corollary \ref{largegeneralbound}, we are done, since for $m$ large it suffices to consider the leading order contribution. %Big Oh is 3 ish
\end{proof}
\setcounter{section}{8}
\setcounter{thm}{1}

\vfill
\setcitestyle{numbers}
\bibliographystyle{apalike}
\bibliography{bibliography.bib}
\end{document}